\documentclass[12pt]{amsart}

\usepackage{ucs}

\usepackage{amssymb}
\usepackage{amsthm}
\usepackage{amsmath}
\usepackage{latexsym}
\usepackage[cp1251]{inputenc}
\usepackage{graphicx}
\usepackage{wrapfig}
\usepackage{caption}
\usepackage{subcaption}
\usepackage{indentfirst}
\usepackage[left=2.6cm,right=2.6cm,top=2.6cm,bottom=2.6cm,bindingoffset=0cm]{geometry}
\usepackage{enumerate}

\DeclareMathOperator{\aut}{Aut}

\DeclareMathOperator{\id}{id}

\DeclareMathOperator{\inv}{Inv}

\DeclareMathOperator{\Span}{Span}

\DeclareMathOperator{\sym}{Sym}
\DeclareMathOperator{\rad}{rad}

\DeclareMathOperator{\OO}{O}

\makeatletter 
\def\@seccntformat#1{\csname the#1\endcsname. } 
\def\@biblabel#1{#1.}

\makeatother

\title{Infinite family of nonschurian separable association schemes}

\author{Grigory Ryabov}
\address{Sobolev Institute of Mathematics, Novosibirsk, Russia}
\address{Novosibirsk State University, Novosibirsk, Russia}
\email{gric2ryabov@gmail.com}

\thanks{The work is supported by Russian Scientific Fund (project No.~19-11-00039)}

\date{}

\newtheorem*{theo1}{Theorem 1}

\newtheorem*{ques}{Question}

\newtheorem{prop}{Proposition}[section]
\newtheorem{lemm}[prop]{Lemma}

\begin{document}

\vspace{\baselineskip}
\vspace{\baselineskip}

\vspace{\baselineskip}

\vspace{\baselineskip}

\begin{abstract}
An infinite family of nonschurian separable association schemes is constructed.
\\
\\
\textbf{Keywords}: Coherent configurations, association schemes, isomorphisms.
\\
\textbf{MSC}:05E30, 05C60, 20B35.
\end{abstract}

\maketitle

\section{Introduction}
A \emph{coherent configuration} $\mathcal{X}$ on a finite set $\Omega$ can be thought as a special partition of $\Omega\times \Omega$ for which the diagonal $1_{\Omega}$ of $\Omega\times \Omega$ is a union of classes.  The number $|\Omega|$ is called the \emph{degree} of $\mathcal{X}$. If the diagonal is exactly a class of the partition then $\mathcal{X}$ is an \emph{association scheme}. The notion of a coherent configuration goes back to Higman who studied permutation groups via analyzing invariant binary relations~\cite{Hig} and Weisfeiler and Leman in connection with the graph isomorphism problem~\cite{WeisL}.  For more details on coherent configurations we refer the readers to~\cite{CP}. 

There are two crucial properties of coherent configurations, namely, \emph{schurity} and \emph{ separability}. We explain them below. The problem of determining whether a given coherent configuration is schurian (separable) is among the most fundamental problems in the theory of coherent configurations.

Let $K$ be a permutation group on the set $\Omega$. Then the partition of $\Omega\times \Omega$ into the $2$-orbits of $K$ defines a coherent configuration $\inv(K)$ on $\Omega$. A coherent configuration $\mathcal{X}$ on $\Omega$ is called \emph{schurian} if $\mathcal{X}=\inv(K)$ for some $K\leq \sym(\Omega)$, or equivalently, if $\mathcal{X}=\inv(\aut(\mathcal{X}))$. The mapping $\mathcal{X}\mapsto \aut(\mathcal{X})$ is a Galois correspondence between schurian coherent configurations and the $2$-closed permutation groups~\cite[Theorem~2.2.8]{CP}.

Schurian coherent configuration are usually studied in the frames of the permutation group theory.  However, the most of coherent configurations are nonschurian. By this reason, the theory of coherent configurations is sometimes called ``group theory without groups''. One of the motivations to study shurian coherent configurations comes from the graph isomorphism problem which is equivalent to finding the orbits of the automorphism group of a given coherent configuration. For a schurian coherent configuration, the latter can be done efficiently (see~\cite[Section~2.2.3]{CP}).

Let $\mathcal{X}=(\Omega,S)$ and $\mathcal{X}^{\prime}=(\Omega^{\prime},S^{\prime})$ be coherent configurations. An \emph{algebraic isomorphism} between $\mathcal{X}$ and $\mathcal{X}^{\prime}$ is a bijection from $S$ to $S^{\prime}$ that preserves intersection numbers. A \emph{combinatorial isomorphism} between $\mathcal{X}$ and $\mathcal{X}^{\prime}$ is a bijection from $\Omega$ to $\Omega^{\prime}$ that maps the basis relations of $\mathcal{X}$ to the basis relations of $\mathcal{X}^{\prime}$. Every combinatorial isomorphism induces the algebraic one. However, the converse statement does not hold in general. A coherent configuration $\mathcal{X}$ is called \emph{separable} if every algebraic isomorphism from $\mathcal{X}$ to another coherent configuration is induced by a combinatorial isomorphism. In this case $\mathcal{X}$ is determined up to isomorphism by the tensor of its intersection numbers. It follows that if $\mathcal{X}$ is constructed from a graph by the Weisfeiler-Leman algorithm then the isomorphism between this graph and any other graph can be verified efficiently in a pure combinatorial way (see~\cite[Section~4.6.1]{CP}).

It seems natural to ask how the schurity and separability are related. Actually, there are infinite families of nonschurian nonseparable coherent configurations as well as schurian separable coherent configurations~\cite{CP}. Infinite families of schurian nonseparable coherent configurations arise from finite affine and projective planes~\cite[Sections~2.5.1-2.5.2]{CP}. However, to the best of the author knowledge, no infinite family of nonschurian separable coherent configurations is known. The following question was asked in~\cite[p.~65]{CP}.

\begin{ques}
Whether there exists an infinite family of nonschurian separable coherent configurations?
\end{ques}

We give an affirmative answer to this question. More precisely, we prove the following theorem.

\begin{theo1}\label{main}
For every prime $p\geq 5$, there exists a nonschurian association scheme of degree $4p^2$ which is separable.
\end{theo1}

The author would like to thank prof. I. Ponomarenko for drawing his attention to the problem and the anonymous referee for valuable comments which help the author to improve the text significantly.

\section{Construction}
In this section we describe an infinite family of association schemes. In the next section it will be proved that they are nonschurian and separable. The schemes from this family are Cayley schemes. To make the description clearer, we describe firstly $S$-rings (Schur rings) corresponding to them. In what follows, we freely use basic facts concerned with coherent configurations and $S$-rings. For a background of coherent configurations and $S$-rings, we refer the readers to the monograph~\cite{CP} and the paper~\cite{Ry} respectively, where  necessary statements, definitions, and notations can be found.

\subsection{$S$-ring}
The cyclic group of order $n$ is denoted by $C_n$. Let $p\geq 5$ be a prime and $G=A\times P$, where $A\cong C_2\times C_2$ and $P\cong C_p\times C_p$. Let $a_1$, $a_2$, and $a_3$ be nontrivial elements of $A$ and $P_1$, $P_2$, $P_3$ pairwise distinct subgroups of $P$ of order~$p$. Put $I=\{1,2,3\}$, $X_i=P_ia_i$ for each $i\in I$, and
$$\mathcal{S}=\{\{g\},~X_ig:~g\in P,~i\in I\},~\text{and}~\mathcal{A}=\Span_{\mathbb{Z}}\{\underline{X}:~X\in \mathcal{S}\},$$
where $\underline{X}=\sum_{x\in X} {x}$. For an arbitrary $X\subseteq G$, we put $\rad(X)=\{g\in G:~Xg=gX=X\}$ and $X^{-1}=\{x^{-1}:~x\in X\}$.

\begin{lemm}\label{sring}
In the above notations, $\mathcal{A}$ is a commutative $S$-ring over $G$.
\end{lemm}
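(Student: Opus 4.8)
The plan is to verify directly the three defining axioms of a Schur ring for the $\mathbb{Z}$-module $\mathcal{A}$: that $\mathcal{S}$ is a partition of $G$ containing the identity class $\{1\}$, that $\mathcal{S}$ is closed under taking inverses, and---the only substantial point---that $\mathcal{A}$ is closed under the multiplication of $\mathbb{Z}[G]$. Commutativity will then come for free: since $G=A\times P$ is abelian, the ring $\mathbb{Z}[G]$ is commutative, hence so is every subring of it, and in particular $\mathcal{A}$.

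First I would record the structural facts. Each $X_ig=P_ia_ig$ has size $p$, and for fixed $i$, as $g$ ranges over $P$, these sets run through the $p$ cosets of $P_i$ multiplied by $a_i$; together with the $p^2$ singletons $\{g\}$, $g\in P$, they partition $G$ into the four blocks $P,\ a_1P,\ a_2P,\ a_3P$, and clearly $\{1\}\in\mathcal{S}$. Inverse-closure is then immediate from commutativity together with $a_i^{-1}=a_i$ and $P_i^{-1}=P_i$: one obtains $\{g\}^{-1}=\{g^{-1}\}$ and $(X_ig)^{-1}=X_ig^{-1}$, both lying in $\mathcal{S}$.

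For closure under multiplication I would use the identity $\underline{X_ig}=a_i\underline{P_i}\,g$ and reduce everything to products of basic sums. Products involving at least one singleton merely translate a basic set and stay in $\mathcal{A}$. The crux is the product of two sums supported outside $P$, namely $(a_i\underline{P_i}\,g)(a_j\underline{P_j}\,h)=a_ia_j\,\underline{P_i}\,\underline{P_j}\,gh$, which I would split into two cases governed by the arithmetic of the two direct factors. If $i=j$, then $a_i^2=1$ and $\underline{P_i}^2=p\,\underline{P_i}$, so the product equals $p\,\underline{P_igh}$, a sum of singletons lying in $\mathbb{Z}[P]\subseteq\mathcal{A}$. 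If $i\neq j$, then $a_ia_j=a_k$ is the third involution of $A$, and since distinct subgroups of order $p$ in $P\cong C_p\times C_p$ satisfy $P_i\cap P_j=1$ and $P_iP_j=P$, one gets $\underline{P_i}\,\underline{P_j}=\underline{P}$; hence the product equals $a_k\underline{P}$, which is exactly the sum of the $p$ basic sums $a_k\underline{P_k}\,g'$ taken over coset representatives $g'$ of $P_k$ in $P$, and so again belongs to $\mathcal{A}$.

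The only delicate step is this last computation, where the whole construction is tuned: it is precisely the coprime-type relations $P_i\cap P_j=1$ and $P_iP_j=P$ in $C_p\times C_p$, combined with $a_ia_j=a_k$ in $C_2\times C_2$, that force the product of two ``off-diagonal'' basic sums to collapse onto a single new basic combination rather than onto a strictly finer partition. Once these products are checked, all remaining cases follow by commutativity and symmetry, and $\mathcal{A}$ is a commutative S-ring over $G$.
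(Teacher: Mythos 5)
Your proposal is correct and follows essentially the same route as the paper: a direct verification that $\mathcal{S}$ is a Schur partition (identity class, inverse-closure via $a_i^{-1}=a_i$ and $P_i^{-1}=P_i$, and closure of products), with the same case split for products of two sets of size $p$ — your distinction $i=j$ versus $i\neq j$ is exactly the paper's distinction $\rad(X)=\rad(Y)$ versus $\rad(X)\neq\rad(Y)$, and commutativity is likewise inherited from $G$ being abelian. Your group-ring computations ($\underline{P_i}^2=p\,\underline{P_i}$, $\underline{P_i}\,\underline{P_j}=\underline{P}$) just make explicit what the paper states set-theoretically ($XY\subseteq P$, respectively $XY$ is a $P$-coset).
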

\begin{proof}
We need to verify that $\mathcal{S}$ is a Schur partition of $G$, i.e. $\{1_G\}\in \mathcal{S}$, where $1_G$ is the identity of $G$, $\mathcal{S}=\mathcal{S}^{-1}=\{X^{-1}:~X\in \mathcal{S}\}$, and $\underline{X}\underline{Y}\in \mathcal{A}$ for every $X,Y\in \mathcal{S}$.

Clearly, $\mathcal{S}$ is a partition of $G$ such that $\{1_G\}\in \mathcal{S}$. Let $X\in \mathcal{S}$. If $X\subseteq P$ then $X=\{g\}$ for some $g\in P$. In this case $X^{-1}=\{g^{-1}\}$ and $g^{-1}\in P$. So $X^{-1}\in \mathcal{S}$. One can see that $X_i=X_i^{-1}\in \mathcal{S}$ for each $i\in I$. If $X=X_ig$ for some $i\in\{1,2,3\}$ and $g\in P$ then $X^{-1}=X_ig^{-1}\in\mathcal{S}$. Thus, $\mathcal{S}=\mathcal{S}^{-1}$.

Let $X,Y\in \mathcal{S}$. If $|X|=1$ or $|Y|=1$ then $XY\in \mathcal{S}$. Let $|X|=|Y|=p$. If $\rad(X)=\rad(Y)$ then $XY\subseteq P$ and hence $\underline{X}\underline{Y}\in \mathcal{A}$. If $\rad(X)\neq \rad(Y)$ then $XY$ is a $P$-coset. Again, $\underline{X}\underline{Y}\in \mathcal{A}$. Thus, $\mathcal{S}$ is a Schur partition and hence $\mathcal{A}$ is an $S$-ring. Since $G$ is abelian, $\mathcal{A}$ is commutative.
\end{proof}

The \emph{thin radical} of an $S$-ring is defined to be the subgroup generated by all basic sets of size~$1$. 

\begin{lemm}\label{propertyring}
In the above notations, $\mathcal{A}$ satisfies the following properties.
\\
\\
$(A1)$ The thin radical $\OO_{\theta}(\mathcal{A})$ of $\mathcal{A}$ is equal to $P$ and $\mathcal{A}_{G/P}\cong \mathbb{Z}C_2^2$.
\\
\\
$(A2)$ If $i\in I$ then $X_i=X_i^{-1}$, $|X_i|=|\rad(X_i)|=p$, $\rad(X_i)\leq P$, $\rad(X_i)\cong C_p$, and $\rad(X_i)\cap \rad(X_j)=\{1_G\}$ whenever $j\neq i$.
\\
\\
$(A3)$ If $i,j\in I$, $i\neq j$, $T_i=X_ig_i$, $T_j=X_jg_j$ for some $g_i,g_j\in P$, and $T\in \mathcal{S}$ then
$$c_{T_iT_j}^{T}>0 \Leftrightarrow T=X_kg_k~\text{for}~k\in I\setminus\{i,j\}~\text{and some}~g_k\in P.$$
\end{lemm}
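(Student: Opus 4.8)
The plan is to derive all three properties by direct computation in the group ring, exploiting the product structure $G=A\times P$ established before Lemma~\ref{sring}. I first fix notation that will be used throughout: writing $A=\{1_G,a_1,a_2,a_3\}$, the elements $a_1,a_2,a_3$ are the three involutions of $A\cong C_2\times C_2$, so that $a_i^2=1_G$ for each $i$ and $a_ia_j=a_k$ whenever $\{i,j,k\}=I$. By Lemma~\ref{sring}, $\mathcal{A}$ is a commutative $S$-ring, so all products below may be computed in the abelian group ring and reorganized freely.

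For $(A1)$, I would observe that the only singletons in $\mathcal{S}$ are the sets $\{g\}$ with $g\in P$, since every $X_ig$ has cardinality $p>1$; hence the subgroup generated by the basic sets of size~$1$ is exactly $P$, giving $\OO_{\theta}(\mathcal{A})=P$. Because $\underline{P}=\sum_{g\in P}\{g\}\in\mathcal{A}$, the group $P$ is an $\mathcal{A}$-subgroup and the quotient $\mathcal{A}_{G/P}$ is defined. Applying the projection $\pi\colon G\to G/P\cong A$, each singleton $\{g\}$, $g\in P$, maps to the identity coset, while each $X_ig=P_ia_ig$ lies in and maps to the single coset $a_iP$. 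Thus the four images are the four singletons of $G/P$, so $\mathcal{A}_{G/P}=\mathbb{Z}(G/P)\cong\mathbb{Z}C_2^2$. For $(A2)$, I would compute directly: $X_i^{-1}=(P_ia_i)^{-1}=a_iP_i=P_ia_i=X_i$ using $a_i^2=1_G$, that $P_i$ is a subgroup, and commutativity; $|X_i|=|P_i|=p$; and $\rad(X_i)=\{g\in G:P_ia_ig=P_ia_i\}=\{g:P_ig=P_i\}=P_i$, which is a cyclic group of order $p$ contained in $P$. Finally, for distinct $i,j$ the groups $P_i,P_j$ are distinct subgroups of prime order $p$, hence meet trivially, giving $\rad(X_i)\cap\rad(X_j)=\{1_G\}$.

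The heart of the lemma, and the step I expect to be the \emph{main obstacle}, is $(A3)$, since it is the structure-constant identity that the later schurity and separability arguments will rely on. I would compute the product $\underline{T_i}\,\underline{T_j}=\underline{X_ig_i}\,\underline{X_jg_j}$ explicitly. Writing it out gives
$$\underline{X_ig_i}\,\underline{X_jg_j}=\Bigl(\sum_{x\in P_i}xa_ig_i\Bigr)\Bigl(\sum_{y\in P_j}ya_jg_j\Bigr)=a_ia_j\,g_ig_j\sum_{x\in P_i,\,y\in P_j}xy .$$
Here I use $(A2)$: since $P_i\cap P_j=\{1_G\}$ and $|P_i||P_j|=p^2=|P|$, we have $P_iP_j=P$ and every element of $P$ is uniquely a product $xy$ with $x\in P_i$, $y\in P_j$, so $\sum_{x,y}xy=\underline{P}$. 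Together with $a_ia_j=a_k$ and $g_ig_j\in P$, this collapses the product to $a_k g_ig_j\underline{P}=\underline{a_kP}$. The final point is that the coset $a_kP$ is the disjoint union $a_kP=\bigsqcup_g X_kg$ of the $p$ basic sets of the form $X_kg=P_ka_kg$, so $\underline{a_kP}=\sum_{g}\underline{X_kg}$ with all coefficients equal to $1$. Reading off structure constants, $c_{T_iT_j}^{T}=1$ exactly when $T=X_kg_k$ for some $g_k\in P$ and $c_{T_iT_j}^{T}=0$ otherwise, which is precisely the claimed equivalence. The only genuinely load-bearing facts are $P_iP_j=P$ and the decomposition of $a_kP$ into the basic sets $X_kg$; once these are in place, $(A3)$ follows.
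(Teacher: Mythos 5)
Your proof is correct and follows essentially the same route as the paper: $(A1)$ and $(A2)$ by reading off the definition of $\mathcal{S}$ together with $\rad(X_i)=P_i$, and $(A3)$ by observing that the product $T_iT_j$ fills out the coset $Pa_k$ (via $P_iP_j=P$), whose basic sets are exactly the $X_kg$. The only difference is that you spell out the group-ring computation explicitly and obtain the sharper fact $c_{T_iT_j}^{T}=1$, where the paper argues via the containment $T\subseteq T_iT_j\subseteq Pa_k$; this is a matter of detail, not of method.
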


\begin{proof}
The property $(A1)$ follows directly from the definition of $\mathcal{S}$. The property $(A2)$ follows from the definition of $\mathcal{S}$ and the observation that $\rad(X_i)=P_i$. Let us check $(A3)$. Note that $T_iT_j\subseteq Pa_k$, where $k\in I\setminus\{i,j\}$. Clearly, $c_{T_iT_j}^{T}>0$ if and only if $T\subseteq T_iT_j\subseteq Pa_k$. The latter inclusion is equivalent to the fact that $T=X_kg_k$ for some $g_k\in P$.
\end{proof}

Let $i\in I$ and $Y_i=Pa_i$. It is easy to see that $Y_i=\bigcup \limits_{g\in P} X_ig$. Put
$$\mathcal{S}_i=\{\{g\}, X_jg, X_kg, Y_i:~g\in P\}~\text{and}~\mathcal{A}_i=\Span_{\mathbb{Z}}\{\underline{X}:~X\in \mathcal{S}_i\},$$
where $\{j,k\}=I\setminus \{i\}$.
Let $(i,j)\in J=\{(1,2),(1,3),(2,3)\}$. Put
$$\mathcal{S}_{ij}=\{\{g\},X_kg, Y_i,Y_j:~g\in P\}~\text{and}~\mathcal{A}_{ij}=\Span_{\mathbb{Z}}\{\underline{X}:~X\in \mathcal{S}_{ij}\},$$
where $\{k\}=I\setminus \{i,j\}$.
Finally, put
$$\mathcal{S}_0=\{\{g\},Y_1,Y_2,Y_3:~g\in P\}~\text{and}~\mathcal{A}_0=\Span_{\mathbb{Z}}\{\underline{X}:~X\in \mathcal{S}_{ij}\}.$$
 
\begin{lemm}\label{orderring}
In the above notations, the following statements hold.
\\
\\
$(1)$ If $i,j\in I$ such that $i\neq j$ then $\mathcal{S}=\{X_i\cap X_j:~X_i\in \mathcal{S}_i,~X_j\in \mathcal{S}_j\}$.
\\
\\
$(2)$ If $(j,k),(l,m)\in J$ such that $\{j,k\}\cap \{l,m\}=\{i\}$ then $\mathcal{S}_i=\{X_{jk}\cap X_{lm}:~X_{jk}\in \mathcal{S}_{jk},~X_{lm}\in \mathcal{S}_{lm}\}$.
\end{lemm}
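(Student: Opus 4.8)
The plan is to prove both statements in the same way, by recognizing that each asserts that a finer partition of $G$ is recovered as the common refinement (the partition into pairwise nonempty intersections of basic sets) of two coarser partitions. The essential point is that $\mathcal{S}_i$ and $\mathcal{S}_j$ are both coarsenings of $\mathcal{S}$: the former is obtained from $\mathcal{S}$ by merging all cosets $X_ig$ ($g\in P$) into the single set $Y_i=Pa_i$, the latter by merging all cosets $X_jg$ into $Y_j$, while every other basic set of $\mathcal{S}$ survives unchanged in both. Thus the two partitions ``hide'' complementary families of cosets, and intersecting them ought to restore $\mathcal{S}$.

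For statement $(1)$ I would first note that, since $\mathcal{S}_i$ and $\mathcal{S}_j$ are partitions of $G$, the family $\mathcal{R}$ of nonempty intersections $U\cap V$ with $U\in\mathcal{S}_i$ and $V\in\mathcal{S}_j$ is again a partition of $G$. It then suffices to check that every basic set of $\mathcal{S}$ belongs to $\mathcal{R}$; for then $\mathcal{S}\subseteq\mathcal{R}$, and since both are partitions of the same set $G$, this inclusion forces $\mathcal{S}=\mathcal{R}$ (no further block can be added to a family that already covers $G$ disjointly), so the reverse inclusion need not be checked by hand. The verification is a short case analysis on the type of a basic set, writing each one explicitly as an intersection: a singleton $\{g\}$ equals $\{g\}\cap\{g\}$; the set $X_ig$ equals $Y_i\cap X_ig$ with $Y_i\in\mathcal{S}_i$ and $X_ig\in\mathcal{S}_j$ (the membership in $\mathcal{S}_j$ holds because $i\neq j$); symmetrically $X_jg=X_jg\cap Y_j$; and if $\{k\}=I\setminus\{i,j\}$ then $X_kg$ is a common basic set of $\mathcal{S}_i$ and $\mathcal{S}_j$, so $X_kg=X_kg\cap X_kg$.

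Statement $(2)$ is proved identically, one level higher in the hierarchy of partitions. Writing $\{a,b\}=I\setminus\{i\}$, the hypothesis $\{j,k\}\cap\{l,m\}=\{i\}$ means that, up to relabelling, the two index-pairs are $\{i,a\}$ and $\{i,b\}$, so the partitions in play are $\mathcal{S}_{ia}=\{\{g\},X_bg,Y_i,Y_a:g\in P\}$ and $\mathcal{S}_{ib}=\{\{g\},X_ag,Y_i,Y_b:g\in P\}$, and the target is $\mathcal{S}_i=\{\{g\},X_ag,X_bg,Y_i:g\in P\}$. Exactly as before, each basic set of $\mathcal{S}_i$ is an intersection: $\{g\}=\{g\}\cap\{g\}$; $X_ag=X_ag\cap Y_a$ with $X_ag\in\mathcal{S}_{ib}$ and $Y_a\in\mathcal{S}_{ia}$; $X_bg=Y_b\cap X_bg$; and $Y_i=Y_i\cap Y_i$ since both partitions retain $Y_i$ as a block. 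Here the condition $\{j,k\}\cap\{l,m\}=\{i\}$ is used twice: that the intersection contains $i$ guarantees $Y_i$ is a block of both partitions, and that the two pairs are distinct guarantees $a\neq b$, so that both coset-families $X_ag$ and $X_bg$ are recovered. Again $\mathcal{S}_i$ is contained in the common refinement, and equality follows since both are partitions of $G$.

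There is no serious obstacle here: once the inclusion and the counting remark are in place, everything reduces to bookkeeping of which coset-families are kept fine and which are merged in each coarser partition. The only points that require care are to keep track of the index conventions, in particular which index gives the surviving fine coset-type in each $\mathcal{S}_{\bullet}$, and to invoke the fact that an inclusion between two partitions of the same set is automatically an equality, which is what lets the verification of a single inclusion suffice.
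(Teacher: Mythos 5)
Your proof is correct and takes essentially the same route as the paper: the paper's proof consists of the single assertion that the lemma follows immediately from the definitions of $\mathcal{S}$, $\mathcal{S}_i$, and $\mathcal{S}_{ij}$, and your case-by-case verification (together with the standard remark that an inclusion between two partitions of the same set forces equality, and the implicit restriction to nonempty intersections) is precisely the routine checking the paper leaves to the reader.
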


\begin{proof}
The statement of the lemma immediately follows from the definitions of $\mathcal{S}_i$ and $\mathcal{S}_{ij}$.
\end{proof}

\begin{lemm}\label{fusionring}
In the above notations, the following statements hold.
\\
\\
$(1)$ For every $i\in I$, $\mathcal{A}_i$ is an $S$-ring over $G$ such that $\mathcal{A}_i\leq \mathcal{A}$ and $\mathcal{A}_i\cong (\mathbb{Z}C_p\wr \mathbb{Z}C_2)\otimes (\mathbb{Z}C_p\wr \mathbb{Z}C_2)$, where $\wr$ and $\otimes$ denote the wreath and tensor products of $S$-rings respectively.
\\
\\
$(2)$ For every $(i,j)\in J$, $\mathcal{A}_{ij}$ is an $S$-ring over $G$ such that $\mathcal{A}_{ij}\leq \mathcal{A}_i$ and $\mathcal{A}_{ij}\leq \mathcal{A}_j$.
\\
\\
$(3)$ $\mathcal{A}_0$ is an $S$-ring over $G$ such that $\mathcal{A}_0\leq \mathcal{A}_{ij}$ for every $(i,j)\in J$ and $\mathcal{A}_0\cong\mathbb{Z}C_p^2\wr \mathbb{Z}C_2^2$.
\end{lemm}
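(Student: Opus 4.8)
The plan is to recognize each of the rings $\mathcal{A}_i$ and $\mathcal{A}_0$ as a standard tensor or wreath product of $S$-rings; this simultaneously establishes that they are $S$-rings and pins down their isomorphism type, so that parts $(1)$ and $(3)$ fall out at once. The ring $\mathcal{A}_{ij}$ will instead be obtained as the intersection $\mathcal{A}_i\cap \mathcal{A}_j$ inside $\mathbb{Z}G$, which is automatically an $S$-ring. All of the claimed inclusion relations $\leq$ reduce, once the relevant objects are known to be $S$-rings, to the combinatorial fact that one Schur partition coarsens another: the identity $Y_i=\bigcup_{g\in P}X_ig$ shows that each $Y_i$ is a union of basic sets of $\mathcal{S}$, and a direct comparison of the definitions shows that $\mathcal{S}_0$ is coarser than every $\mathcal{S}_{ij}$, which in turn is coarser than both $\mathcal{S}_i$ and $\mathcal{S}_j$; hence the underlying module inclusions hold.

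For part $(1)$, fix $i$ and put $\{j,k\}=I\setminus\{i\}$. Since $P_j,P_k$ are distinct subgroups of order $p$ in $P\cong C_p\times C_p$, we have $P=P_j\times P_k$, and since $a_j,a_k$ are distinct involutions of $A\cong C_2\times C_2$, we have $A=\langle a_j\rangle\times\langle a_k\rangle$ and $a_i=a_ja_k$. Putting $U=P_j\times\langle a_j\rangle$ and $W=P_k\times\langle a_k\rangle$ gives $G=U\times W$ with $U\cong W\cong C_p\times C_2$. Over $U$ take the $S$-ring $\mathcal{B}_U\cong \mathbb{Z}C_p\wr\mathbb{Z}C_2$ whose basic sets are the singletons of $P_j$ together with the coset $X_j=P_ja_j$, and define $\mathcal{B}_W$ analogously over $W$. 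I would then check that the basic sets of $\mathcal{B}_U\otimes \mathcal{B}_W$ are exactly the members of $\mathcal{S}_i$: writing $g=g_jg_k$ with $g_j\in P_j$, $g_k\in P_k$, one has $X_jg=X_j\times\{g_k\}$ and $X_kg=\{g_j\}\times X_k$ (each coset being fixed under right translation by an element of its own radical), the singletons of $P$ are the products of singletons, and $Y_i=Pa_i=(P_ja_j)\times(P_ka_k)=X_j\times X_k$. A count shows both families have $(p+1)^2$ members, so this correspondence is a bijection. Hence $\mathcal{A}_i=\mathcal{B}_U\otimes \mathcal{B}_W$ is an $S$-ring isomorphic to $(\mathbb{Z}C_p\wr\mathbb{Z}C_2)\otimes(\mathbb{Z}C_p\wr\mathbb{Z}C_2)$, and $\mathcal{A}_i\leq \mathcal{A}$ by the coarsening remark.

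For part $(3)$, note that $P\trianglelefteq G$ with $G/P\cong A\cong C_2^2$, so $\mathbb{Z}C_p^2\wr\mathbb{Z}C_2^2=\mathbb{Z}P\wr\mathbb{Z}(G/P)$ is an $S$-ring over $G$ whose basic sets are the singletons of $P$ together with the full preimages of the nontrivial elements of $G/P$, namely $Y_1,Y_2,Y_3$; these are precisely the members of $\mathcal{S}_0$, so $\mathcal{A}_0\cong\mathbb{Z}C_p^2\wr\mathbb{Z}C_2^2$ is an $S$-ring, and $\mathcal{A}_0\leq\mathcal{A}_{ij}$ follows from the coarsening remark. For part $(2)$, I would verify that $\mathcal{A}_{ij}=\mathcal{A}_i\cap\mathcal{A}_j$: an element of $\mathbb{Z}G$ lies in both $\mathcal{A}_i$ and $\mathcal{A}_j$ iff its coefficient function is constant on every member of $\mathcal{S}_i$ and of $\mathcal{S}_j$. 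Comparing the two partitions coset by coset (they agree on $P$ and on $Pa_k$, while $\mathcal{S}_i$ fuses $Pa_i$ into the single class $Y_i$ and splits $Pa_j$, and $\mathcal{S}_j$ does the opposite) shows that this happens exactly when the function is constant on every member of $\mathcal{S}_{ij}$. Thus $\mathcal{A}_{ij}=\mathcal{A}_i\cap\mathcal{A}_j$ is an $S$-ring contained in both, giving $\mathcal{A}_{ij}\leq\mathcal{A}_i$ and $\mathcal{A}_{ij}\leq\mathcal{A}_j$.

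The routine but most delicate point is the basic-set bookkeeping in part $(1)$: under the identification $G=U\times W$ one must confirm that every member of $\mathcal{S}_i$ is a product of a basic set of $\mathcal{B}_U$ with one of $\mathcal{B}_W$ and that the assignment is bijective, the crux being the identity $Y_i=X_j\times X_k$, which rests on $a_i=a_ja_k$ together with $P=P_j\times P_k$. Everything else is standard: that tensor products, wreath products, and intersections of $S$-rings are again $S$-rings are known facts, and the inclusion statements become immediate coarsening checks once the $S$-ring property has been secured.
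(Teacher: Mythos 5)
Your proof is correct, and for parts (1) and (3) it is essentially the paper's argument run in the opposite logical direction: the paper first verifies the Schur-partition condition for all of $\mathcal{S}_i$, $\mathcal{S}_{ij}$, $\mathcal{S}_0$ at once by computing products such as $\underline{Y_i}\,\underline{X_j}=p\,\underline{Y_k}$ and $\underline{Y_i}\,\underline{Y_j}=p^2\,\underline{Y_k}$, and only afterwards identifies $\mathcal{A}_i=\mathcal{A}_{U_j}\otimes\mathcal{A}_{U_k}$ (with exactly the subgroups $U_j=\langle X_j\rangle$, $U_k=\langle X_k\rangle$ you use) and $\mathcal{A}_0=(\mathcal{A}_0)_P\wr(\mathcal{A}_0)_{G/P}$; you instead exhibit the tensor and wreath products first and match their basic sets against $\mathcal{S}_i$ and $\mathcal{S}_0$, which delivers the $S$-ring property and the isomorphism type in one stroke. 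The genuine divergence is part (2): the paper obtains the $S$-ring property of $\mathcal{A}_{ij}$ from the same direct product computations, whereas you realize $\mathcal{A}_{ij}$ as the intersection $\mathcal{A}_i\cap\mathcal{A}_j$ and invoke the standard fact that an intersection of $S$-rings over the same group is again an $S$-ring -- equivalently, that the common constancy classes of coefficient functions form the join of the two Schur partitions, which you correctly compute coset by coset to be $\mathcal{S}_{ij}$. Your route buys economy (no multiplication in $\mathbb{Z}G$ is ever needed for $\mathcal{A}_{ij}$) at the cost of importing that closure fact; the paper's computations are longer but self-contained and serve all three families simultaneously. Both arguments are complete, and your bookkeeping in part (1) -- the identity $Y_i=X_jX_k$ via $a_i=a_ja_k$ and $P=P_jP_k$, the observation that $X_jg$ depends only on the $P_k$-component of $g$, and the count of $(p+1)^2$ basic sets -- is exactly what underlies the paper's Eq.~(1).
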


\begin{proof}
Let $i\in I$. From the definition of $Y_i$ it follows that: $(1)$ $Y_i=Y_i^{-1}$; $(2)$ $Y_ig=Y_i$ for every $g\in P$; $(3)$ if $j\neq i$ then $\underline{Y_i}\underline{X_j}=\underline{P}\underline{P_i}a_ia_j=p\underline{P}a_k=p\underline{Y_k}$, where $k\in I\setminus \{i,j\}$, and $\underline{Y_i}\underline{X_j}\subseteq P$ if $i=j$; $(4)$ if $i\neq j$ then $\underline{Y_i}\underline{Y_j}=p^2\underline{Y_k}$, where $k\in I\setminus \{i,j\}$, and $\underline{Y_i}\underline{Y_j}\subseteq P$ if $i=j$. This implies that each of the partitions $\mathcal{S}_i$, $i\in I$, $\mathcal{S}_{ij}$, $(i,j)\in J$, $\mathcal{S}_0$ defines an $S$-ring over $G$. Due to the definitions of $\mathcal{S}_i$, $\mathcal{S}_{ij}$, $\mathcal{S}_0$, we obtain $\mathcal{A}_i\leq \mathcal{A}_i$ for every $i\in I$, $\mathcal{A}_{ij}\leq \mathcal{A}_i$ and $\mathcal{A}_{ij}\leq \mathcal{A}_j$ for every $(i,j)\in J$, and $\mathcal{A}_{0}\leq \mathcal{A}_{ij}$ for every $(i,j)\in J$.

Again, let $i\in I$. Put $U_j=\langle X_j \rangle=P_j\times \langle a_j \rangle$ and $U_k=\langle X_k \rangle=P_k\times \langle a_k \rangle$, where $\{j,k\}=I\setminus \{i\}$. The groups $U_j$ and $U_k$ are $\mathcal{A}_i$-subgroups because they are generated by basic sets of $\mathcal{A}_i$. One can see that $U_j\cap U_k=\{1_G\}$ and hence $G=U_j\times U_k$. Since $X_jX_k=P_ja_jP_ka_k=Y_i$, we conclude that 
$$\mathcal{A}_i=\mathcal{A}_{U_j}\otimes \mathcal{A}_{U_k}.~\eqno(1)$$
Note that $(\mathcal{A}_i)_{P_j}\cong (\mathcal{A}_i)_{P_k}\cong \mathbb{Z}C_p$, $\rad(X_j)=P_j$, and $\rad(X_k)=P_k$. So $\mathcal{A}_{U_j}\cong \mathcal{A}_{U_k}\cong \mathbb{Z}C_p\wr \mathbb{Z}C_2$. Together with Eq.~(1), this yields that $\mathcal{A}_i\cong (\mathbb{Z}C_p\wr \mathbb{Z}C_2)\otimes (\mathbb{Z}C_p\wr \mathbb{Z}C_2)$.

By the definition of $\mathcal{A}_0$, we have $(\mathcal{A}_0)_P\cong \mathbb{Z}C_p^2$ and each basic set of $\mathcal{A}_0$ outside $P$ is a $P$-coset. Thus, $\mathcal{A}_0=(\mathcal{A}_0)_P\wr (\mathcal{A}_0)_{G/P}\cong \mathbb{Z}C_p^2\wr \mathbb{Z}C_2^2$.
\end{proof}

\subsection{Cayley scheme}

Let $R=\{r(X):~X\in\mathcal{S}\}$, where $r(X)=\{(g,xg):~x\in X,~g\in G\}$. From~\cite[Theorem~2.4.16]{CP} it follows that $\mathcal{X}=(G,R)$ is a Cayley scheme over $G$, i.e. an association scheme on $G$ whose automorphism group contains the group of right translations of $G$.

The \emph{thin radical} of an association scheme is the set of all its basis relations of valency~$1$. An association scheme is \emph{regular} if it coincides with its thin radical. The thin radical of any association scheme forms a group with respect to composition (see~\cite[p.30]{CP}). 

\begin{lemm}\label{propertyscheme}
Any association scheme $\mathcal{Y}=(\Omega,S)$ algebraically isomorphic to $\mathcal{X}$ is commutative and satisfies the following properties.
\\
\\
$(B1)$ The thin radical $E$ of $\mathcal{Y}$ is isomorphic to $C_p\times C_p$ and the quotient $\mathcal{Y}_{\Omega/e}$, where $e=\bigcup_{s\in E} s$, is a regular association scheme isomorphic to $C_2\times C_2$.
\\
\\
$(B2)$ For every $i\in I$, there exists a basis relation $r_i\in S$ such that $r_i=r_i^*$, $n_{r_i}=n_{e_i}=p$, where $e_i=\rad(r_i)$, $e_i\subseteq e$, $E_i=\{s\in S:~s\subseteq e_i\}$ is a subgroup of $E$ isomorphic to $C_p$, and $e_i\cap e_j=1_{\Omega}$ whenever $j\neq i$.
\\
\\
$(B3)$ If $t_i=r_iu_i$, $t_j=r_ju_j$ for some $u_i,u_j\in E$, and $t\in S$ then
$$c_{t_it_j}^{t}>0\Leftrightarrow t=r_ku_k~\text{for}~k\in I\setminus \{i,j\}~\text{and some}~u_k\in E.$$
\end{lemm}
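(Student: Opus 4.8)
The plan is to prove Lemma~\ref{propertyscheme} by transporting the structural properties $(A1)$--$(A3)$ of $\mathcal{A}$ across an arbitrary algebraic isomorphism $\varphi\colon\mathcal{X}\to\mathcal{Y}$. The key principle is that an algebraic isomorphism preserves all intersection numbers, and therefore preserves every property that is expressible purely in terms of intersection numbers: valencies $n_s$, the transpose operation $s\mapsto s^*$, the radical of a relation, and the subscheme/quotient structure. So the whole lemma reduces to checking that each assertion $(B1)$--$(B3)$ is the $\varphi$-image of the corresponding assertion $(A1)$--$(A3)$, and that commutativity itself is intersection-number-definable (it is: $\mathcal{Y}$ is commutative iff $c_{st}^u=c_{ts}^u$ for all basis relations, and these equal the intersection numbers of $\mathcal{X}$, which is commutative by Lemma~\ref{sring}).

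First I would fix notation: since $\mathcal{X}=(G,R)$ is the Cayley scheme built from the $S$-ring $\mathcal{A}$, each basis relation $r(X)$ corresponds to a basic set $X\in\mathcal{S}$, and the intersection numbers of $\mathcal{X}$ coincide with the structure constants $c^T_{XY}$ of $\mathcal{A}$; valency $n_{r(X)}=|X|$, and $r(X)^*=r(X^{-1})$. Then I would set $r_i=\varphi(r(X_i))$ for each $i\in I$, and let $E$ be the thin radical of $\mathcal{Y}$, i.e.\ the set of valency-$1$ basis relations. For $(B1)$: the thin radical is intersection-number-definable, so $E=\varphi(\{r(\{g\}):g\in P\})$; by $(A1)$ this is a group isomorphic to $C_p\times C_p$, and the quotient $\mathcal{Y}_{\Omega/e}$ is the $\varphi$-image of $\mathcal{X}_{G/P}$, hence regular isomorphic to $C_2\times C_2$ again by $(A1)$. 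For $(B2)$ I would take $e_i=\rad(r_i)$; since the radical of a relation is determined by intersection numbers, $\rad(r_i)=\varphi(\rad(r(X_i)))=\varphi(r(\rad(X_i)))=\varphi(r(P_i))$, and then $|e_i|=p$, $e_i\subseteq e$, $E_i\cong C_p$, and the trivial-intersection condition $e_i\cap e_j=1_\Omega$ all follow by applying $\varphi$ to $(A2)$. Finally $(B3)$ is the most direct: it is literally the statement $(A3)$ with $X_i,X_j,X_k$ replaced by $r_i,r_j,r_k$ and $P$-cosets replaced by $E$-cosets, and since $c^t_{t_it_j}>0$ is a condition on intersection numbers preserved by $\varphi$, it transfers verbatim once one checks that the sets $r_iu_i$ for $u_i\in E$ are exactly the $\varphi$-images of the basic sets $X_ig$, $g\in P$.

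The main obstacle, and the step deserving the most care, is making precise the claim that \emph{the radical of a basis relation is preserved by algebraic isomorphisms}, and that $E$-cosets of the $r_i$ are the correct images of the $P$-cosets $X_ig$. Concretely, I must verify that $\rad(r(X))$ has an intersection-number characterization (for instance, $e'\subseteq\rad(r)$ iff the convolution $e'\cdot r=r$ in the adjacency-algebra sense, which is an identity among structure constants) so that $\varphi(\rad(r))=\rad(\varphi(r))$; and I must verify that left-multiplication by thin relations partitions the set $\{r(X_ig):g\in P\}$ in a way matching the $E$-action on $\{r_iu_i:u_i\in E\}$, again using only intersection numbers. Everything else is a routine transfer, so I would state once and for all a short principle---\emph{an algebraic isomorphism preserves valency, transpose, thin radical, radical of a relation, and the induced subscheme and quotient structures}---cite the relevant definitions from~\cite{CP}, and then read off $(B1)$--$(B3)$ from $(A1)$--$(A3)$ by applying $\varphi$.
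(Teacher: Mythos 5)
Your proposal is correct and follows essentially the same route as the paper: verify that $\mathcal{X}$ itself is commutative and satisfies $(B1)$--$(B3)$ by translating $(A1)$--$(A3)$ through the correspondence $X\mapsto r(X)$ (with $e=r(P)$, $r_i=r(X_i)$), and then transport these properties along $\varphi$ using the fact that they are expressible in intersection numbers and hence preserved by algebraic isomorphisms. The paper states this invariance in one sentence where you spell out the verification (radical, thin radical, quotients), but the argument is the same.
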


\begin{proof}
The commutativity and properties $(B1)-(B3)$ are preserved under algebraic isomorphisms. So it sufficient to check that $\mathcal{X}$ is commutative and satisfies $(B1)-(B3)$. The commutativity of $\mathcal{X}$ follows from the fact that $G$ is abelian. Note that $e=r(P)$ and $\mathcal{X}$ satisfies $(B1)$ because $\mathcal{A}$ satisfies $(A1)$. Due to~$(A2)$, we obtain that $(B2)$ holds for $r_i=r(X_i)$, $i\in I$. Finally, $(B3)$ follows from $(A3)$ and the equalities $e=r(P)$, $r_i=r(X_i)$, $i\in I$.
\end{proof}

Put 
$$R_i=\{r(X):~X\in \mathcal{S}_i\},~i\in I,$$ 
$$R_{ij}=\{r(X):~X\in \mathcal{S}_{ij}\},~(i,j)\in J,$$ 
$$R_0=\{r(X):~X\in \mathcal{S}_0\}.$$ 
Due to~\cite[Theorem~2.4.16]{CP}, we conclude that $\mathcal{X}_i=(G,R_i)$, $i\in I$, $\mathcal{X}_{ij}=(G,R_{ij})$, $(i,j)\in J$, and $\mathcal{X}_0=(G,R_0)$ are Cayley schemes over $G$. Lemma~\ref{fusionring} implies that each of the Cayley schemes $\mathcal{X}_i$, $\mathcal{X}_{ij}$, $\mathcal{X}_0$ is a fusion of $\mathcal{X}$, i.e. every basis relation of each of the above Cayley schemes is a union of some basis relations of $\mathcal{X}$.

Let $\mathcal{Y}=(\Omega,S)$ be an association scheme algebraically isomorphic to $\mathcal{X}$ and $\varphi$ an algebraic isomorphism  from $\mathcal{X}$ to $\mathcal{Y}$. Since each of the Cayley schemes $\mathcal{X}_i$, $\mathcal{X}_{ij}$, $\mathcal{X}_0$ is a fusion of $\mathcal{X}$, one can define the images $\mathcal{Y}_i=\mathcal{X}_i^{\varphi}=(\Omega,S_i)$, $\mathcal{Y}_{ij}=\mathcal{X}_{ij}^{\varphi}=(\Omega,S_{ij})$, $\mathcal{Y}_0=\mathcal{X}_0^{\varphi}=(\Omega,S_0)$ of these Cayley schemes under $\varphi$. In view of~\cite[Corollary~2.3.21]{CP}, each of the above images is an association scheme on $\Omega$. 

\begin{lemm}\label{orderscheme}
In the above notations, the following statements hold.
\\
\\
$(1)$ If $i,j\in I$ such that $i\neq j$ then $S=\{s_i\cap s_j:~s_i\in S_i,~s_j\in S_j\}$.
\\
\\
$(2)$ If $(j,k),(l,m)\in J$ such that $\{j,k\}\cap \{l,m\}=\{i\}$ then $S_i=\{s_{jk}\cap s_{lm}:~s_{jk}\in S_{jk},~s_{lm}\in S_{lm}\}$.
\end{lemm}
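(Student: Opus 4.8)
The plan is to transfer the decompositions already proved on the $S$-ring side in Lemma~\ref{orderring} to the algebraically isomorphic scheme $\mathcal{Y}$ through the algebraic isomorphism $\varphi$. First I would record the relational reformulation of Lemma~\ref{orderring} for $\mathcal{X}$ itself. Since $r(X)=\{(g,xg):x\in X,\ g\in G\}$ and $(g,h)\in r(X)$ if and only if $hg^{-1}\in X$, one has $r(X)\cap r(X')=r(X\cap X')$ for all $X,X'\subseteq G$. Hence Lemma~\ref{orderring}$(1)$ becomes the statement $R=\{r_i\cap r_j:r_i\in R_i,\ r_j\in R_j\}$, i.e. the partition $R$ of $G\times G$ into basis relations of $\mathcal{X}$ is the common refinement of the partitions $R_i$ and $R_j$; likewise Lemma~\ref{orderring}$(2)$ reads $R_i=\{r_{jk}\cap r_{lm}:r_{jk}\in R_{jk},\ r_{lm}\in R_{lm}\}$.

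The key point is that $\varphi$, viewed as the induced bijection on unions of basis relations, commutes with set intersection. Recall that $\varphi\colon R\to S$ extends to the lattice of relations of $\mathcal{X}$ by $a^{\varphi}=\bigcup_{r\in R,\,r\subseteq a}r^{\varphi}$, and that by the definition of the image schemes $\mathcal{Y}_i=\mathcal{X}_i^{\varphi}$, $\mathcal{Y}_{ij}=\mathcal{X}_{ij}^{\varphi}$, $\mathcal{Y}_0=\mathcal{X}_0^{\varphi}$ this extension restricts to bijections $R_i\to S_i$, $R_{ij}\to S_{ij}$, $R_0\to S_0$. Now let $a=\bigcup_{r\in A}r$ and $b=\bigcup_{r\in B}r$ with $A,B\subseteq R$. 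Because distinct basis relations are disjoint, $a\cap b=\bigcup_{r\in A\cap B}r$, and because the relations $r^{\varphi}$, $r\in R$, are pairwise disjoint members of $S$,
$$a^{\varphi}\cap b^{\varphi}=\Bigl(\bigcup_{r\in A}r^{\varphi}\Bigr)\cap\Bigl(\bigcup_{r\in B}r^{\varphi}\Bigr)=\bigcup_{r\in A\cap B}r^{\varphi}=(a\cap b)^{\varphi}.$$
Thus $\varphi$ preserves intersections of unions of basis relations, and consequently carries common refinements of coarser partitions to common refinements.

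Finally I would apply this. For statement $(1)$, take any $s\in S$ and write $s=r^{\varphi}$ for the unique $r\in R$; by the relational form of Lemma~\ref{orderring}$(1)$ we have $r=r_i\cap r_j$ with $r_i\in R_i$, $r_j\in R_j$, whence $s=(r_i\cap r_j)^{\varphi}=r_i^{\varphi}\cap r_j^{\varphi}$ and $r_i^{\varphi}\in S_i$, $r_j^{\varphi}\in S_j$. Conversely, any $s_i\cap s_j$ with $s_i\in S_i$, $s_j\in S_j$ equals $(r_i\cap r_j)^{\varphi}$ for the corresponding $r_i\in R_i$, $r_j\in R_j$, and $r_i\cap r_j$ is either empty, in which case so is $s_i\cap s_j$, or lies in $R$; this gives $S=\{s_i\cap s_j:s_i\in S_i,\ s_j\in S_j\}$. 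Statement $(2)$ follows identically from the relational form of Lemma~\ref{orderring}$(2)$ together with the bijections $R_i\to S_i$, $R_{jk}\to S_{jk}$, $R_{lm}\to S_{lm}$. The only genuine obstacle is the gap between the defining property of an algebraic isomorphism, namely preservation of intersection numbers, and the set-theoretic intersections in the statement; this gap is bridged precisely by the disjointness computation above, which shows that on the sublattice generated by basis relations $\varphi$ acts as an intersection-preserving Boolean isomorphism.
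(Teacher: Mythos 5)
Your proposal is correct and takes essentially the same route as the paper: the paper's proof consists of citing Lemma~\ref{orderring} together with \cite[Proposition~2.3.18~(1)]{CP}, the latter being precisely the fact that the extension of an algebraic isomorphism to unions of basis relations preserves set-theoretic intersections. The only difference is that you prove this intersection-preservation property directly by the disjointness computation instead of citing it, and you make explicit the translation $r(X)\cap r(X')=r(X\cap X')$ and the empty-intersection caveat, both of which the paper leaves implicit.
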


\begin{proof}
The statement of the lemma follows from Lemma~\ref{orderring} and~\cite[Proposition~2.3.18~(1)]{CP}. 
\end{proof}

\begin{lemm}\label{fusionscheme}
In the above notations, the following statements hold.
\\
\\
$(1)$ $\mathcal{Y}_i\leq \mathcal{Y}$ and $\mathcal{Y}_i$ is isomorphic to the tensor product of two association schemes each of which is the wreath product of two regular association schemes of degrees~$p$ and~$2$ for every $i\in I$.
\\
\\
$(2)$ $\mathcal{Y}_{ij}\leq \mathcal{Y}_i$ and $\mathcal{Y}_{ij}\leq \mathcal{Y}_i$ for every $(i,j)\in J$.
\\
\\
$(3)$ $\mathcal{Y}_0\leq \mathcal{Y}_{ij}$ for every $(i,j)\in J$ and $\mathcal{Y}_0$ is isomorphic to the wreath product of two regular association schemes of degrees~$p^2$ and~$4$.
\end{lemm}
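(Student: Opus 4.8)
The plan is to treat separately the two kinds of assertions hidden in each item: the fusion containments (the ``$\leq$'') and the isomorphism types in (1) and (3).

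For the containments, note first that Lemma~\ref{fusionring}, read through the correspondence $X\mapsto r(X)$, gives $\mathcal{X}_i\leq \mathcal{X}$, $\mathcal{X}_{ij}\leq \mathcal{X}_i$, $\mathcal{X}_{ij}\leq \mathcal{X}_j$ and $\mathcal{X}_0\leq \mathcal{X}_{ij}$; that is, every relation of $R_i$ (resp. $R_{ij}$, $R_0$) is a union of relations from $R$, and similarly for the three-level chain. The algebraic isomorphism $\varphi$ is a bijection $R\to S$ carrying basis relations to basis relations, and by the very definition of the images $\mathcal{Y}_i=\mathcal{X}_i^{\varphi}$, $\mathcal{Y}_{ij}=\mathcal{X}_{ij}^{\varphi}$, $\mathcal{Y}_0=\mathcal{X}_0^{\varphi}$, the relations of $S_i$ (resp. $S_{ij}$, $S_0$) are exactly the $\varphi$-images of those unions, hence are again unions of basis relations of $\mathcal{Y}$. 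Thus $\mathcal{Y}_i\leq \mathcal{Y}$, $\mathcal{Y}_{ij}\leq \mathcal{Y}_i$, $\mathcal{Y}_{ij}\leq \mathcal{Y}_j$, and $\mathcal{Y}_0\leq \mathcal{Y}_{ij}$; this is precisely the preservation of the fusion relation under an algebraic isomorphism, cf.~\cite[Corollary~2.3.21]{CP}. This settles the containments in all three items.

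It remains to identify the isomorphism types. Since $\mathcal{Y}_i=\mathcal{X}_i^{\varphi}$ and $\mathcal{Y}_0=\mathcal{X}_0^{\varphi}$, the map $\varphi$ restricts to algebraic isomorphisms $\varphi_i\colon \mathcal{X}_i\to \mathcal{Y}_i$ and $\varphi_0\colon \mathcal{X}_0\to \mathcal{Y}_0$. By Lemma~\ref{fusionring}(1),(3), as association schemes $\mathcal{X}_i$ is isomorphic to the tensor product of two copies of the wreath product of regular schemes of degrees $p$ and $2$, and $\mathcal{X}_0$ is isomorphic to the wreath product of regular schemes of degrees $p^2$ and $4$. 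Hence it is enough to show that $\mathcal{X}_i$ and $\mathcal{X}_0$ are \emph{separable}: then $\varphi_i$ and $\varphi_0$ are induced by combinatorial isomorphisms, so $\mathcal{Y}_i\cong \mathcal{X}_i$ and $\mathcal{Y}_0\cong \mathcal{X}_0$, which is exactly the asserted description. For the reduction I would use that every regular (thin) association scheme is separable and that the tensor product of separable schemes is separable (both standard, from~\cite{CP}); granting separability of the two wreath products $\mathbb{Z}C_p\wr \mathbb{Z}C_2$ and $\mathbb{Z}C_p^2\wr \mathbb{Z}C_2^2$, closure under tensor products then yields separability of $\mathcal{X}_i$, while $\mathcal{X}_0$ is itself one of these wreath products.

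The main obstacle is therefore the wreath-product step, since, unlike the tensor product, separability is not preserved by arbitrary wreath products; one must either invoke a wreath-product separability criterion from~\cite{CP} that applies to these specific components, or verify separability of $\mathbb{Z}C_p\wr \mathbb{Z}C_2$ and $\mathbb{Z}C_p^2\wr \mathbb{Z}C_2^2$ directly. For a hands-on verification I would exploit the rigidity of a wreath product with thin base and thin top: the equivalence relation glued from the thin radical partitions the point set into the bottom fibers, each carrying the separable regular scheme of the base group, and the quotient is the separable regular scheme of the top group. Given an algebraic isomorphism onto another scheme, one first realizes it combinatorially on a single fiber and on the quotient, and then propagates the fiberwise isomorphism across the complete-multipartite relations joining distinct fibers; the thinness of both base and top is what guarantees that these local choices extend to a single combinatorial isomorphism, completing the argument.
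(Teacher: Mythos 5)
Your proposal is correct and follows essentially the same route as the paper: the containments come from the definition of the images $\mathcal{Y}_i=\mathcal{X}_i^{\varphi}$, $\mathcal{Y}_{ij}=\mathcal{X}_{ij}^{\varphi}$, $\mathcal{Y}_0=\mathcal{X}_0^{\varphi}$, and the isomorphism types in (1) and (3) are obtained by reducing to separability of $\mathcal{X}_i$ and $\mathcal{X}_0$, using that regular schemes are separable together with closure under tensor and wreath products. The only point of divergence is your worry that ``separability is not preserved by arbitrary wreath products'': this is not an actual obstacle, since the paper simply cites \cite[Theorem~3.4.9]{CP}, which states that the wreath product of two separable association schemes is separable in full generality; your hands-on fiber-plus-quotient propagation argument is essentially the proof of that theorem (and the thinness of the base and top plays no role in it, since the between-fiber relations of a wreath product are full and hence impose no constraints beyond the quotient).
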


\begin{proof}
Lemma~\ref{fusionring} implies that Lemma~\ref{fusionscheme} holds if $\varphi$ is trivial, i.e. if $\varphi$ maps every relation to itself (in this case $\mathcal{Y}=\mathcal{X}$). Suppose that $\varphi$ is nontrivial. By the definition of the image of a fusion of the Cayley scheme $\mathcal{X}$ with respect to the algebraic isomorphism $\varphi$, we obtain $\mathcal{Y}_i\leq \mathcal{Y}$ for every $i\in I$, $\mathcal{Y}_{ij}\leq \mathcal{Y}_i$ and $\mathcal{Y}_{ij}\leq \mathcal{Y}_j$ for every $(i,j)\in J$, and $\mathcal{Y}_{0}\leq \mathcal{Y}_{ij}$ for every $(i,j)\in J$. 

By~\cite[Theorem~2.3.33]{CP}, every regular association scheme is separable. The tensor and wreath products of two separable association schemes are separable by~\cite[Corollary~3.2.24]{CP} and~\cite[Theorem~3.4.9]{CP} respectively. So $\mathcal{X}_i$ and $\mathcal{X}_0$ are separable and hence $\mathcal{Y}_i\cong \mathcal{X}_i$ and $\mathcal{Y}_0\cong \mathcal{X}_0$. This proves Statements~1 and~3 of the lemma.
\end{proof}

\section{Proof of Theorem~1}
	
We start with auxiliary lemma.

\begin{lemm}\label{aux}
Let $\mathcal{Y}=(\Omega,S)$ be an association scheme, $e$ a thin parabolic of $\mathcal{Y}$, and $f\in \aut(\mathcal{Y})$. If for every $\Delta\in \Omega/e$ there exists $\delta\in \Delta$ such that $\delta^f=\delta$ then $f=\id_{\Omega}$. 
\end{lemm}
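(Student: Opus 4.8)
The plan is to exploit the regular action of the thin radical of $e$ on each class of $\Omega/e$. Since $e$ is a thin parabolic, the set $E=\{s\in S:\ s\subseteq e\}$ consists of basis relations of valency~$1$ and forms a group under relational composition (as recalled before the statement). Each $s\in E$ induces a permutation $\hat{s}$ of $\Omega$ via $\hat{s}(\alpha)=\beta\Leftrightarrow(\alpha,\beta)\in s$. Because $e$ is an equivalence relation with classes $\Omega/e$ and $e=\bigcup_{s\in E}s$, every $\hat{s}$ maps each point into its own $e$-class and hence preserves each $\Delta\in\Omega/e$ setwise; moreover, for any two points of the same class there is a (unique) $s\in E$ carrying one to the other. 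So $E$ acts transitively (in fact regularly) on each $\Delta\in\Omega/e$.

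The second ingredient I would isolate is that $f$ commutes with every thin permutation. Fix $s\in E$. Since $f\in\aut(\mathcal{Y})$ preserves the basis relation $s$, we have $(\alpha,\beta)\in s\Leftrightarrow(\alpha^{f},\beta^{f})\in s$ for all $\alpha,\beta\in\Omega$. Rewriting this in terms of $\hat{s}$ gives $\hat{s}(\alpha^{f})=(\hat{s}(\alpha))^{f}$ for every $\alpha\in\Omega$, i.e. $f$ and $\hat{s}$ commute as permutations of $\Omega$.

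With these two facts the conclusion is immediate. Let $\Delta\in\Omega/e$ be arbitrary and let $\delta\in\Delta$ be the fixed point supplied by the hypothesis, so $\delta^{f}=\delta$. Given any $\gamma\in\Delta$, transitivity of $E$ on $\Delta$ yields $s\in E$ with $\hat{s}(\delta)=\gamma$, and then
$$\gamma^{f}=(\hat{s}(\delta))^{f}=\hat{s}(\delta^{f})=\hat{s}(\delta)=\gamma.$$
Thus $f$ fixes every point of $\Delta$, and since $\Delta$ was arbitrary, $f=\id_{\Omega}$.

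The only point requiring care is the regular (really just transitive) action of $E$ on each class, which rests on the definition of a thin parabolic: the valency-$1$ basis relations inside $e$ partition $e$ into graphs of bijections, and their union being the full equivalence relation forces transitivity on classes. No genuine obstacle arises beyond this bookkeeping; the argument is a direct combination of the regular thin action with the commutation of $f$ and the thin permutations.
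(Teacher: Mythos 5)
Your proof is correct and is essentially the paper's own argument: both rest on the observation that the basis relation joining the fixed point $\delta$ to an arbitrary point $\gamma$ of its class lies inside $e$, hence has valency $1$, and is preserved by $f$, which forces $\gamma^f=\gamma$. Your packaging of this via the permutations $\hat{s}$ and their commutation with $f$ is only a cosmetic reformulation of the paper's direct uniqueness argument ($(\delta,\gamma)\in s$ and $(\delta,\gamma^f)\in s$ with $n_s=1$).
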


\begin{proof}
Let us prove that $\alpha^f=\alpha$ for every $\alpha\in \Omega$. Suppose that $\Delta$ is the class of $e$, containing $\alpha$. By the condition of the lemma, there exists $\delta\in \Delta$ such that $\delta^f=\delta$. Let $s\in S$ such that $(\delta,\alpha)\in s$. Note that $s^f=s$ because $f\in \aut(\mathcal{Y})$. So $(\delta^f,\alpha^f)=(\delta,\alpha^f)\in s$. Since $\delta$ and $\alpha$ belong to the same class of $e$, the pair $(\delta,\alpha)$ belongs to $e$. Therefore $s\subseteq e$ and hence $n_s=1$.  This implies that $\alpha^f=\alpha$. Thus, $f$ fixes every point from $\Omega$, i.e. $f=\id_{\Omega}$.
\end{proof}

To prove Theorem~1, it is sufficient to prove that the Cayley scheme $\mathcal{X}$ constructed in the previous section is nonschurian and separable. At first, we prove that the automorphism group of an association scheme algebraically isomorphic to $\mathcal{X}$ is regular. We divide the proof of this fact into two lemmas. In the first lemma, we show that the automorphism group of such scheme is semiregular and, in the second lemma, we check that it is transitive and hence regular.

\begin{lemm}\label{proof1}
If $\mathcal{Y}=(\Omega,S)$ is an association scheme algebraically isomorphic to $\mathcal{X}$ then the group $\aut(\mathcal{Y})$ is semiregular. In particular, $|\aut(\mathcal{Y})|\leq 4p^2$.
\end{lemm}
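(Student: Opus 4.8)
The plan is to show that the stabilizer of a single point in $\aut(\mathcal{Y})$ is trivial. Since $\mathcal{Y}$ carries the thin parabolic $e$ (coming from property $(B1)$, with $\Omega/e$ a regular scheme of degree $4$), Lemma~\ref{aux} is the natural engine: if an automorphism $f$ fixes at least one point in every class of $e$, then $f=\id_{\Omega}$. So I would aim to reduce the fixed-point structure of an arbitrary stabilizing automorphism to the hypotheses of Lemma~\ref{aux}.

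\textbf{The main reduction.}
First I would fix $\alpha\in\Omega$ and set $f\in\aut(\mathcal{Y})_{\alpha}$. The class $\Delta_0$ of $e$ containing $\alpha$ already contains a fixed point, namely $\alpha$. The key point is to propagate fixed points to the other three $e$-classes. Here I would exploit the relations $r_i$ of property $(B2)$: each $r_i$ is self-paired of valency $p$ with thin radical $e_i\subseteq e$, and by $(B1)$ the quotient $\mathcal{Y}_{\Omega/e}\cong C_2\times C_2$ connects the four $e$-classes. For a class $\Delta\neq\Delta_0$, there is a unique basis relation of the quotient joining $\Delta_0$ to $\Delta$, and this lifts to a relation of the form $r_iu_i$. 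The heart of the argument should be that, because $n_{r_i}=n_{e_i}=p$ and $e_i=\rad(r_i)$, the set $r_i(\alpha)=\{\beta:(\alpha,\beta)\in r_i\}$ of neighbors of $\alpha$ in $\Delta$ is a single $e_i$-coset of size $p$; since $f$ fixes $\alpha$ and preserves $r_i$, it permutes this $p$-element set, and as $f$ has order dividing $|\aut(\mathcal{Y})|$ I would argue it must fix a point there (for instance, if the induced action on that $e_i$-coset is by a regular-type cyclic rigidity forced by the thinness of $e_i$, a point is fixed). Thus every $e$-class acquires a fixed point of $f$, and Lemma~\ref{aux} forces $f=\id$.

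\textbf{Packaging semiregularity and the bound.}
Once $\aut(\mathcal{Y})_{\alpha}=1$ for every $\alpha$, the group $\aut(\mathcal{Y})$ is semiregular by definition. The bound $|\aut(\mathcal{Y})|\leq 4p^2$ then follows immediately from the orbit-stabilizer theorem: each orbit has size $|\aut(\mathcal{Y})|$ and is contained in $\Omega$, so $|\aut(\mathcal{Y})|$ divides $|\Omega|=4p^2$ and in particular does not exceed it.

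\textbf{Where the difficulty lies.}
The routine parts are the quotient bookkeeping and the final orbit-stabilizer count. The genuine obstacle is the middle step: showing that a stabilizing automorphism necessarily fixes a point in each of the three remaining $e$-classes. The subtlety is that $f$ a priori permutes the $p$ neighbors of $\alpha$ in a given class without an obvious fixed point; I expect to need the full strength of the thin radical structure --- that $e_i$ acts regularly on each of its cosets and that $r_i$ is essentially a union of $e_i$-translates --- to pin down the action of $f$ on $r_i(\alpha)$ and extract a fixed point. Getting the precise combinatorial identity that forces this fixed point (rather than merely an invariant set) is the crux, and it is presumably where properties $(B2)$ and $(B3)$, together with commutativity, must be used in concert.
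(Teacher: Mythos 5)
Your skeleton matches the paper's: fix $\varepsilon\in\Omega$ and $f\in\aut(\mathcal{Y})_{\varepsilon}$, produce a fixed point of $f$ in each class of $e$, invoke Lemma~\ref{aux} to get $f=\id_{\Omega}$, and deduce the bound $|\aut(\mathcal{Y})|\leq|\Omega|=4p^2$ from semiregularity (that last packaging is correct). But the step you yourself flag as the crux is genuinely missing, and the mechanism you sketch for it cannot work. By $(B2)$ the set $\varepsilon r_i$ is a full class $\Lambda_i$ of $e_i$ and $f$ permutes it; however, no rigidity or order argument confined to $\Lambda_i$ can force a fixed point there. All that the local structure gives is that $f$ moves $\alpha_i\in\Lambda_i$ to a point $\beta_i$ with $(\alpha_i,\beta_i)\in s_i$ for some thin $s_i\in E_i$, i.e.\ $f$ acts on $\Lambda_i$ as a translation by $s_i$; since $e_i=\rad(r_i)$ and $E$ is abelian, such a translation preserves $r_i$ and all thin relations restricted to $\Lambda_i$, so a fixed-point-free action is perfectly consistent with the data you invoke. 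Moreover, appealing to "the order of $f$ divides $|\aut(\mathcal{Y})|$" is circular: controlling $|\aut(\mathcal{Y})|$ is precisely what the lemma is proving, and nothing at this stage bounds that order.

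What actually closes the gap in the paper is an interaction between the three classes, not an analysis of one class at a time. Property $(B3)$ gives $(\alpha_i,\alpha_j)\in r_ku$ for $k\in I\setminus\{i,j\}$ and some $u\in E$; applying $f$ yields $(\beta_i,\beta_j)\in r_ku$, while composing gives $(\beta_i,\beta_j)\in s_i^{*}r_kus_j$, and the latter is a basis relation because $s_i,s_j$ are thin. Hence $s_i^{*}r_kus_j=r_ku$, which by commutativity and $e_k=\rad(r_k)$ forces $s_i^{*}s_j\in E_k$ for every choice of distinct $i,j$. This system of containments, combined with $E_l\cong C_p$, $E_l\cap E_m=\{1_{\Omega}\}$ for $l\neq m$, and $p\geq 5$, kills all the $s_i$: if every $s_i$ were nontrivial, then writing $s_1^{*}s_2=s_3^l$, $s_2^{*}s_3=s_1^m$, $s_3^{*}s_1=s_2^n$ one gets $s_1^{m+1}\in E_3$ and $s_1^{m-1}\in E_2$, hence $s_1^2=1_{\Omega}$, impossible in $C_p$; and once one $s_i$ is trivial, the containments force the other two to lie in intersections $E_j\cap E_k=\{1_{\Omega}\}$. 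Only then do $\varepsilon,\alpha_1,\alpha_2,\alpha_3$ supply fixed points in the four $e$-classes (pairwise distinct classes, again by $(B3)$), so that Lemma~\ref{aux} applies. In short, your proposal identifies the right reduction but leaves the essential combinatorial argument unproved, and the heuristic you offer for it points in a direction that provably cannot succeed.
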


\begin{proof}
Throughout the proof of this lemma, we use the notations from Lemma~\ref{propertyscheme} for $\mathcal{Y}$. Let $\varepsilon\in \Omega$, $f\in \aut(\mathcal{Y})$ such that $\varepsilon^f=\varepsilon$, and $i\in I$. Denote the class of $e_i$ containing $\varepsilon$ by $\Delta_i$. Suppose that $\alpha_i\in \varepsilon r_i$ and $\Lambda_i$ is the class of $e_i$ containing $\alpha_i$. Due to the equality $e_i=\rad(r_i)$, we conclude that $\Delta_i\times \Lambda_i\subseteq r_i$. Together with the equality $n_{r_i}=p$ from $(B2)$, this yields that
$$\varepsilon r_i=\Lambda_i.~\eqno(2)$$ 
Let $\alpha_i^f=\beta_i$. Since $\varepsilon^f=\varepsilon$ and $(r_i)^f=r_i$, Eq.~(2) implies that $\beta_i\in \Lambda_i$ and hence $(\alpha_i,\beta_i)\in s_i$ for some $s_i\in E_i$. The equality $e_i\subseteq e$ from $(B2)$ yields that $n_{s_i}=1$. 

Let $i,j\in I$ such that $i\neq j$. The pair $(\alpha_i,\alpha_j)$ belongs to $r_i^{*}r_j$ because $(\varepsilon,\alpha_i)\in r_i$ and $(\varepsilon,\alpha_j)\in r_j$. Due to $(B2)$, we have $r_i=r_i^*$ and hence $(\alpha_i,\alpha_j)\in r_ir_j$. From $(B3)$ it follows that $(\alpha_i,\alpha_j)\in r_ku\in S$ for $k\in I\setminus \{i,j\}$ and some $u\in E$. Since $f$ is an automorphism of $\mathcal{Y}$, the equality $(r_ku)^f=r_ku$ holds. This implies that
$$(\beta_i,\beta_j)\in r_ku.~\eqno(3)$$
On the other hand, 
$$(\beta_i,\beta_j)\in s_i^{*}r_kus_j~\eqno(4)$$ 
because $(\alpha_i,\beta_i)\in s_i$, $(\alpha_i,\alpha_j)\in r_ku$, and $(\alpha_j,\beta_j)\in s_j$. 
The relation $s_i^{*}r_kus_j$ is basis because each of the relations $s_i^{*}$, $r_ku$, $s_j$ is basis and $n_{s_i^{*}}=n_{s_j}=1$. Therefore from Eqs.~(3) and~(4) it follows that $r_ku=s_i^{*}r_kus_j$. Since $\mathcal{Y}$ is commutative by Lemma~\ref{propertyscheme} and $n_u=1$, this implies that $s_i^{*}s_j\subseteq \rad(r_k)=e_k$ or, equivalently,
$$s_i^{*}s_j\in E_k.~\eqno(5)$$

Assume that $s_i\neq 1_{\Omega}$ for every $i\in I$. Then for every $i\in I$, we have $E_i=\langle s_i \rangle$  because $E_i\cong C_p$ by $(B2)$. So Eq.~(5) yields that 
$$s_1^{*}s_2=s_3^l,~s_2^{*}s_3=s_1^m,~\text{and}~s_3^{*}s_1=s_2^n$$
for some $l,m,n\in \{0,\ldots,p-1\}$. The above equalities imply that $s_1^{m+1}=s_3^{-l+1}\in E_3$ and $s_1^{m-1}=s_2^{-n-1}\in E_2$. Note that $E_1\cap E_2=E_1\cap E_3=\{1_{\Omega}\}$ by the last equality from $(B2)$. So $s_1^{m+1}=s_1^{m-1}=1_{\Omega}$ and hence $s_1^2=1_{\Omega}$. This means that $s_1=1_{\Omega}$ because $E_1\cong C_p$ by $(B2)$ and $p\geq 5$, a contradiction to the assumption. Thus, there exists $i\in I$ such that $s_i=1_{\Omega}$. 

Due to Eq.~(5), we have $s_j\in E_k$ and $s_k\in E_j$, where $\{j,k\}=I\setminus \{i\}$. Together with the last equality from $(B2)$ and the equality $s_i=1_{\Omega}$, this yields that
$$s_1=s_2=s_3=1_{\Omega}.~\eqno(6)$$

In view of Eq.~(6), we obtain $\alpha_i=\beta_i=\alpha_i^f$ for every $i\in I$. Note that $\varepsilon$ and $\alpha_i$ belong to distinct classes of $e$ by the choice of $\alpha_i$ for every $i\in I$. Also $\alpha_1$, $\alpha_2$, and $\alpha_3$ belong to pairwise distinct classes of $e$ because otherwise $c_{r_ir_j}^{t}>0$ for some $i,j\in I$ and $t\in E$ which contradicts to $(B3)$. One can see that $|\Omega/e|=4$ by $(B1)$. So the automorphism $f$ fixes at least one point in every class of $e$. Therefore $f=\id_{\Omega}$ by Lemma~\ref{aux}. Thus, the stabilizer $\aut(\mathcal{Y})_{\varepsilon}$ is trivial for every $\varepsilon\in \Omega$ and hence $\aut(\mathcal{Y})$ is semiregular.
\end{proof}

\begin{lemm}\label{proof2}
In the conditions of Lemma~\ref{proof1}, the group $\aut(\mathcal{Y})$ is regular.
\end{lemm}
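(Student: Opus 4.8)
The plan is to show that $\aut(\mathcal{Y})$ acts transitively on $\Omega$; combined with the semiregularity established in Lemma~\ref{proof1}, this immediately gives regularity. Since $\aut(\mathcal{Y})$ is semiregular, every orbit has size $|\aut(\mathcal{Y})|$, so transitivity is equivalent to the single numerical statement $|\aut(\mathcal{Y})|=|\Omega|=4p^2$. Thus the whole burden of the proof is to produce enough automorphisms, i.e.\ to construct a subgroup of $\aut(\mathcal{Y})$ of order $4p^2$. The natural source of such automorphisms is the thin structure already isolated in Lemma~\ref{propertyscheme}.

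First I would exploit the thin radical $E\cong C_p\times C_p$ from $(B1)$. The relations of valency one in a commutative scheme that form a group act on $\Omega$ by regular permutations: for each $s\in E$ the map $\alpha\mapsto \alpha s$ (the unique point with $(\alpha,\alpha s)\in s$) is a well-defined bijection, it commutes with every basis relation, and hence lies in $\aut(\mathcal{Y})$. This yields a subgroup of $\aut(\mathcal{Y})$ isomorphic to $E\cong C_p^2$, acting semiregularly with orbits exactly the classes of $e$ (by $(B2)$, since $e=\bigcup_{s\in E}s$ and $|e\text{-class}|=p^2$). So far I have $p^2$ automorphisms and they act transitively on each single $e$-class but do not move between the four classes of $e$.

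The remaining factor of $4$ must come from the quotient scheme $\mathcal{Y}_{\Omega/e}$, which by $(B1)$ is a regular scheme isomorphic to $C_2\times C_2$. Here I would use the wreath-product decomposition from Lemma~\ref{fusionscheme}$(3)$: the fusion scheme $\mathcal{Y}_0$ is isomorphic to the wreath product of a regular scheme of degree $p^2$ by a regular scheme of degree $4$, and the automorphism group of such a wreath product contains the base-group action realizing the top $C_2\times C_2$. Concretely, $\aut(\mathcal{Y}_0)\supseteq \aut(\mathcal{Y})$ is false in general, but since $\mathcal{Y}_0\leq \mathcal{Y}$ we have $\aut(\mathcal{Y})\leq \aut(\mathcal{Y}_0)$, so I instead want to lift automorphisms of the $C_2^2$-quotient through the finer relations $r_i$ of $\mathcal{Y}$ itself. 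The cleanest route is to produce, for each nonidentity element of $\Omega/e\cong C_2\times C_2$, a permutation of $\Omega$ that permutes the four $e$-classes accordingly while respecting all basis relations; the relations $r_i$ with $\rad(r_i)=e_i\leq e$ from $(B2)$, together with $(B3)$, pin down how the classes must be matched, and the isomorphism $\mathcal{Y}_0\cong \mathbb{Z}C_p^2\wr \mathbb{Z}C_2^2$ guarantees such a class-permuting automorphism exists.

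The main obstacle I expect is precisely this lifting step: showing that a symmetry of the top quotient extends to an honest automorphism of $\mathcal{Y}$, not merely of the coarse scheme $\mathcal{Y}_0$, and that the three automorphisms so obtained together with $E$ generate a group of order $4p^2$ rather than something smaller. To control the order, I would combine the subgroup of order $p^2$ coming from $E$ with a class-transitive complement of order $4$, and check their product is semidirect (or at least that the orbit of a single point already has size $4p^2$). Once transitivity is in hand, semiregularity from Lemma~\ref{proof1} forces $\aut(\mathcal{Y})$ to be regular of order $4p^2$, completing the proof.
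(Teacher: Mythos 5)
Your reduction to transitivity and your first step are sound: in a commutative scheme each thin basis relation $s$ has a permutation matrix $A_s$, commutativity gives $A_s^{-1}A_tA_s=A_t$ for every basis relation $t$, so the permutation $\alpha\mapsto\alpha s$ lies in $\aut(\mathcal{Y})$, and the thin radical $E$ thus yields a subgroup of order $p^2$ whose orbits are exactly the classes of $e$. (The paper does not use this observation at all.) The gap is in your second step, and it is exactly the crux of the lemma. The isomorphism of $\mathcal{Y}_0$ with the wreath product of regular schemes of degrees $p^2$ and $4$ only guarantees class-permuting automorphisms of the \emph{coarse} scheme $\mathcal{Y}_0$; it says nothing about whether any of them preserves the finer basis relations $r_iu$ of $\mathcal{Y}$. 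Indeed $|\aut(\mathcal{Y}_0)|=4p^8$, while (as the lemma will show) $|\aut(\mathcal{Y})|=4p^2$, so all but a vanishing fraction of the automorphisms supplied by the wreath decomposition fail to be automorphisms of $\mathcal{Y}$, and neither $(B2)$ nor $(B3)$ by itself singles out one that works. You correctly name this lifting step as the main obstacle, but then you assert rather than prove that it can be overcome; as written, the proposal assumes the existence of the very class-permuting automorphisms whose existence is the content of the lemma.

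The paper circumvents precisely this obstacle by never constructing automorphisms of $\mathcal{Y}$ at all. It works in the lattice of fusions $\mathcal{Y}_0\leq\mathcal{Y}_{ij}\leq\mathcal{Y}_i\leq\mathcal{Y}$, which gives the chain $\aut(\mathcal{Y})\leq\aut(\mathcal{Y}_i)\leq\aut(\mathcal{Y}_{ij})\leq\aut(\mathcal{Y}_0)$. Separability of tensor and wreath products of regular schemes pins down the orders exactly, $|\aut(\mathcal{Y}_i)|=4p^4$ and $|\aut(\mathcal{Y}_0)|=4p^8$, while Lemma~\ref{orderscheme} yields the intersection identities $\aut(\mathcal{Y}_1)\cap\aut(\mathcal{Y}_2)=\aut(\mathcal{Y})$ and $\aut(\mathcal{Y}_{12})\cap\aut(\mathcal{Y}_{13})=\aut(\mathcal{Y}_1)$. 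Two applications of the product formula $|AB|=|A||B|/|A\cap B|$ for subgroups then finish the job: first inside $\aut(\mathcal{Y}_0)$, forcing (after relabelling) $|\aut(\mathcal{Y}_{12})|\leq 4p^6$, and then inside $\aut(\mathcal{Y}_{12})$, giving $|\aut(\mathcal{Y})|\geq|\aut(\mathcal{Y}_1)||\aut(\mathcal{Y}_2)|/|\aut(\mathcal{Y}_{12})|\geq 4p^2$. This nonconstructive counting is what produces the missing factor of $4$. To salvage your constructive route you would have to verify, relation by relation, that some bijection permuting the $e$-classes preserves every $r_iu$ --- which essentially amounts to rebuilding a group structure on $\Omega$, i.e.\ to the very work the paper's counting argument is designed to avoid.
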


\begin{proof}
In view of Lemma~\ref{proof1}, it is sufficient to prove that
$$|\aut(\mathcal{Y})|\geq 4p^2.~\eqno(7)$$
Let $\mathcal{Y}_i$, $\mathcal{Y}_{ij}$, and $\mathcal{Y}_0$ be the association schemes defined in Section~2.2. Statement~1 of Lemma~\ref{fusionscheme} and~\cite[Theorem~3.2.21, Theorem~3.4.6]{CP} imply that the group $\aut(\mathcal{Y}_i)$ is the direct product of two permutation groups each of which is the wreath product of two regular groups of degrees~$p$ and~$2$ for every $i\in I$. So
$$|\aut(\mathcal{Y}_i)|=4p^4.~\eqno(8)$$
From Statement~3 of Lemma~\ref{fusionscheme} and~\cite[Theorem~3.4.6]{CP} it follows that 
$$\aut(\mathcal{Y}_0)\geq \aut(\mathcal{Y}_{ij})~\eqno(9)$$
for every $(i,j)\in J$ and the group $\aut(\mathcal{Y}_0)$ is the wreath product of two regular groups of degrees~$p^2$ and~$4$. So
$$|\aut(\mathcal{Y}_0)|=4p^8.~\eqno(10)$$
Statement~1 and~2 of Lemma~\ref{orderscheme} imply that
$$\aut(\mathcal{Y}_i)\cap \aut(\mathcal{Y}_j)=\aut(\mathcal{Y})~\eqno(11)$$
whenever $i\neq j$ and
$$\aut(\mathcal{Y}_{jk})\cap \aut(\mathcal{Y}_{lm})=\aut(\mathcal{Y}_i)~\eqno(12)$$
whenever $\{j,k\}\cap\{l,m\}=\{i\}$ respectively. 

From Eqs.~(8), (9), (10), and~(12) it follows that 
$$|\aut(\mathcal{Y}_{12})||\aut(\mathcal{Y}_{13})|\leq |\aut(\mathcal{Y}_1)||\aut(\mathcal{Y}_0)|=16p^{12}.$$
This implies that at least one of the numbers $|\aut(\mathcal{Y}_{12})|$, $|\aut(\mathcal{Y}_{13})|$ does not exceed~$4p^6$. Without loss of generality, let
$$|\aut(\mathcal{Y}_{12})|\leq 4p^6.~\eqno(13)$$
Now in view of Eqs.~(8), (11), (12), and~(13), we conclude that
$$|\aut(\mathcal{Y})|\geq |\aut(\mathcal{Y}_1)| |\aut(\mathcal{Y}_2)|/|\aut(\mathcal{Y}_{12})|\geq 4p^2.$$
Thus, Eq~(7) holds.
\end{proof}

From Lemma~\ref{proof2} it follows that $\aut(\mathcal{X})$ is regular. In particular, $|\aut(\mathcal{X})|=4p^2$. On the other hand, $|r_1|=|r_2|=|r_3|=|G||n_{r_1}|=4p^3$. So $r_1$, $r_2$, $r_3$ are not $2$-orbits of the group $\aut(\mathcal{X})$ and hence $\mathcal{X}$ is nonschurian.

Let $\mathcal{Y}=(\Omega,S)$ be an association scheme and $\varphi$ an algebraic isomorphism from $\mathcal{X}$ to $\mathcal{Y}$. Lemma~\ref{proof2} implies that $\aut(\mathcal{Y})$ is regular. So we may assume that $\mathcal{Y}$ is a Cayley scheme over a group $H$ of order~$4p^2$. 

Let $\mathcal{T}=\{X(s):~s\in S\}$, where $X(s)=\{x\in H:~(1_{H},x)\in s\}$ and $1_H$ is the identity of $H$. From~\cite[Theorem~2.4.16]{CP} it follows that the partition $\mathcal{T}$ defines the $S$-ring $\mathcal{B}$ over $H$. The bijecton $\psi$ from $\mathcal{S}$ to $\mathcal{T}$ defined by the equality 
$$X^{\psi}=X(r^{\varphi}(X))$$
is an algebraic isomorphism from $\mathcal{A}$ to $\mathcal{B}$ in the sense of~\cite{Ry}. To prove that $\varphi$ is induced by a combinatorial isomorphism from $\mathcal{X}$ to $\mathcal{Y}$, it is sufficient to prove that $\psi$ is induced by a Cayley isomorphism from $\mathcal{A}$ to $\mathcal{B}$, i.e. by a group isomorphism $f$ from $G$ to $H$ for which $\mathcal{S}^f=\mathcal{T}$, where $\mathcal{S}^f=\{X^f:~X\in \mathcal{S}\}$. This follows from the next two lemmas.

\begin{lemm}\label{proof3}
In the above notations, $H\cong C_{2p}\times C_{2p}\cong G$.
\end{lemm}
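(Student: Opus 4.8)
The plan is to identify the two groups separately; the isomorphism $G\cong C_{2p}\times C_{2p}$ is immediate since $G=(C_2\times C_2)\times(C_p\times C_p)$ and, as $\gcd(2,p)=1$, each factor $C_2\times C_p$ is cyclic of order $2p$. So everything reduces to pinning down the structure of $H$.

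First I would record what the algebraic isomorphism $\psi\colon\mathcal{A}\to\mathcal{B}$ transports. Singletons go to singletons and the product of two singletons is again a singleton whose value is read off from the structure constants, so $\psi$ restricts to a group isomorphism of thin radicals; thus $Q:=\OO_{\theta}(\mathcal{B})=P^{\psi}\cong P\cong C_p\times C_p$. The induced algebraic isomorphism of the quotients together with $(A1)$ gives $\mathcal{B}_{H/Q}\cong\mathbb{Z}C_2^2$, so $H/Q\cong C_2\times C_2$ and $|H|=4p^2$; in particular $Q$ is a normal (Sylow) subgroup of $H$.

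Second, and this is the heart of the matter, I would transport the tensor decompositions of Lemma~\ref{fusionring}. Recall $U_l=\langle X_l\rangle=P_l\times\langle a_l\rangle\cong C_{2p}$ for $l\in I$. For each $i\in I$ the restriction $\psi\colon\mathcal{A}_i\to\mathcal{B}_i$ is again an algebraic isomorphism, and tensor decomposability of an $S$-ring is determined by its intersection numbers, hence preserved. Writing $W_l:=U_l^{\psi}=\langle V_l\rangle$ with $V_l=X_l^{\psi}$, the decomposition $\mathcal{A}_i=\mathcal{A}_{U_j}\otimes\mathcal{A}_{U_k}$ arising from $G=U_j\times U_k$ is carried to $\mathcal{B}_i=\mathcal{B}_{W_j}\otimes\mathcal{B}_{W_k}$, which corresponds to an \emph{internal direct product} $H=W_j\times W_k$ (the two factors commute). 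Each $W_l$ is a $\mathcal{B}$-subgroup of order $|W_l|=|U_l|=2p$ whose thin radical $Q_l=\rad(V_l)\cong C_p$ is normal of index $2$; hence $W_l$ is one of the two groups of order $2p$, that is $W_l\cong C_{2p}$ or $W_l\cong D_{2p}$, the dihedral group.

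Finally I would rule out the dihedral case by exploiting that the \emph{same} subgroup $W_l$ occurs in two of these decompositions: $U_l$ is a tensor factor of both $\mathcal{A}_m$ and $\mathcal{A}_n$, where $\{m,n\}=I\setminus\{l\}$, so $H=W_l\times W_n$ and $H=W_l\times W_m$. If $W_l\cong D_{2p}$ then $Z(W_l)=1$ (since $p$ is odd), whence $C_H(W_l)=Z(W_l)\times W_n=W_n$ from the first decomposition and $C_H(W_l)=W_m$ from the second; this forces $W_m=W_n$, contradicting that $\psi$ induces a bijection of subgroups and $U_m\ne U_n$. Therefore every $W_l\cong C_{2p}$, and $H=W_1\times W_2\cong C_{2p}\times C_{2p}\cong G$. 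The step I expect to be the main obstacle is the transport in the third paragraph: one must ensure that algebraic-isomorphism invariance of the tensor decomposition delivers a genuine internal direct product of $H$ with \emph{commuting} factors of order $2p$, rather than merely a factorization $H=W_jW_k$ with $W_j\cap W_k=1$. The commuting of the factors is precisely what makes the centralizer computation available, and it is that computation that eliminates the dihedral possibility.
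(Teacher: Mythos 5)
Your opening two paragraphs are sound and coincide with the opening of the paper's own proof (transport of the thin radical, Sylow argument, $Q\cong C_p\times C_p$ and $H/Q\cong C_2\times C_2$), and your closing centralizer computation would be valid \emph{if} you really had internal direct products $H=W_l\times W_n$ and $H=W_l\times W_m$. But the step you yourself single out as the main obstacle is a genuine gap, and the principle you invoke to pass it is false. An algebraic isomorphism preserves nothing beyond intersection numbers, and the intersection numbers of a tensor product only transport to the following data: $W_j,W_k$ are $\mathcal{B}$-subgroups with $W_j\cap W_k=\{1_H\}$ and $\underline{W_j}\,\underline{W_k}=\underline{H}$, i.e.\ an \emph{exact factorization} $H=W_jW_k$. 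Elementwise commutation of the factors is not an algebraic invariant. A minimal counterexample: the $S$-ring $\mathbb{Z}C_2\otimes(\text{rank-}2\ S\text{-ring over }C_3)$ over $C_6=C_2\times C_3$ is algebraically isomorphic to the $S$-ring over $S_3$ with basic sets $\{1\},\{(12)\},\{(123),(132)\},\{(13),(23)\}$, under which the two tensor factors go to $\langle(12)\rangle$ and $\langle(123)\rangle$ --- an exact factorization of $S_3$ whose factors do not commute.

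Worse, the failure occurs for the very $S$-rings at hand, so the gap cannot be patched fusion-by-fusion. Since $\mathbb{Z}C_p\wr\mathbb{Z}C_2$ over $C_{2p}$ is algebraically isomorphic to the analogous $S$-ring over $D_{2p}$ (send $x^i\mapsto r^i$ and the coset $\langle x\rangle a$ to the set of reflections; the structure constants agree), the scheme $\mathcal{X}_i$ of Lemma~\ref{fusionscheme} has automorphism group $(C_p\wr C_2)\times(C_p\wr C_2)$, and this group contains, besides the three ``product'' regular subgroups, regular subgroups $H^*\cong C_{2p}\times D_{2p}$ embedded with full projection onto one coordinate (a fibre product of $C_p\wr C_2\twoheadrightarrow C_p$ and $C_{2p}\twoheadrightarrow C_p$). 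For such an $H^*$ the images of $U_j$ and $U_k$ under the induced algebraic isomorphism are a dihedral and a cyclic subgroup of order $2p$ which intersect trivially and generate $H^*$ but do \emph{not} commute elementwise. So for a single fusion $\mathcal{B}_i$ your conclusion ``$H=W_j\times W_k$'' is not merely unproven, it is false, and with it the computation $C_H(W_l)=Z(W_l)\times W_n$ collapses; the dihedral case cannot be excluded this way. This is exactly why the paper argues differently: it invokes the classification of groups of order $4p^2$, eliminates $D_{2p}\times D_{2p}$ and $(C_p^2\rtimes C_2)\times C_2$ by the involution count $k_2(H)\leq 3p$ (every involution of $H$ lies in $X_1^{\psi}\cup X_2^{\psi}\cup X_3^{\psi}$), and eliminates $C_{2p}\times D_{2p}$ by showing that two of the subgroups $U_i^{\psi}$ would have to be dihedral and normal (using the central cyclic factor), forcing $H\cong D_{2p}\times D_{2p}$, a contradiction.
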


\begin{proof}
The algebraic isomorphism $\psi$ induces the bijection between $\mathcal{A}$-sections of $G$ and $\mathcal{B}$-sections of $H$. Moreover, $\psi$ maps an $\mathcal{A}$-section to a $\mathcal{B}$-section of the same order (see~\cite[p.~5]{Ry}). So $P^{\psi}$ is a $\mathcal{B}$-subgroup of order $p^2$. The number $m$ of Sylow $p$-subgroups of $H$ divides $|H|=4p^2$ and $m\equiv 1\mod~p$ by the third Sylow theorem. Since $p\geq 5$, we conclude that $m=1$ and hence $P^{\psi}$ is a unique Sylow $p$-subgroup of $H$. So $P^{\psi}$ is normal in $H$ by the second Sylow theorem. 

The algebraic isomorphism $\psi$ induces the algebraic isomorphisms from $\mathcal{A}_P$ to $\mathcal{B}_{P^{\psi}}$ and from $\mathcal{A}_{G/P}$ to $\mathcal{B}_{H/P^{\psi}}$. Since $\mathcal{A}_P=\mathbb{Z}P$, $\mathcal{A}_{G/P}=\mathbb{Z}(G/P)$, and every algebraic isomorphism preserves a rank of an $S$-ring, $\mathcal{B}_{P^{\psi}}=\mathbb{Z}P^{\psi}$ and $\mathcal{B}_{H/P^{\psi}}=\mathbb{Z}(H/P^{\psi})$. Now from~\cite[Corollary~2.3.34]{CP} it follows that
$$P^{\psi}\cong P\cong C_p\times C_p,~H/P^{\psi}\cong G/P\cong C_2\times C_2.~\eqno(14)$$

All groups of order $4p^2$, where $p\geq 5$, are listed in~\cite[Section~IV]{Il}. In accordance with this list and Eq.~(14), $H$ is isomorphic to one of the following groups:
$$C_{2p}\times C_{2p},~C_{2p}\times D_{2p},~D_{2p}\times D_{2p},~(C_p^2\rtimes C_2)\times C_2,$$ where $D_{2p}$ is the dihedral group of order~$2p$ and a nontrivial element of $C_2$ inverts every element of $C_p^2$ in the latter group.

For a given subset $Z$ of a group and an integer $m$, denote by $k_m(Z)$ the number of elements of order~$m$ in $Z$. The sets $X_1$, $X_2$, $X_3$ are the only nontrivial inverse-closed basic sets of $\mathcal{A}$. Therefore $X_1^{\psi}$, $X_2^{\psi}$, $X_3^{\psi}$ are the only nontrivial inverse-closed basic sets of $\mathcal{B}$. This implies that every element of $H$ of order~$2$ lies in $X_1^{\psi}\cup X_2^{\psi} \cup X_3^{\psi}$. Since $|X_i^{\psi}|=|X_i|=p$ for every $i\in I$, we conclude that 
$$k_2(H)=k_2(X_1^{\psi})+k_2(X_2^{\psi})+k_2(X_3^{\psi})\leq 3p.~\eqno(15)$$
If $H\cong D_{2p}\times D_{2p}$ or $H\cong (C_p^2\rtimes C_2)\times C_2$ then $k_2(H)\geq p^2$. In view of $p\geq 5$, we obtain a contradiction to Eq.~(15). Thus, $H\cong C_{2p}\times C_{2p}$ or $H\cong C_{2p}\times D_{2p}$.

We are done if $H\cong C_{2p}\times C_{2p}$. Assume that $H=H_1\times H_2$ where $H_1\cong C_{2p}$ and $H_2\cong D_{2p}$. In this case
$$k_2(H)=2p+1.~\eqno(16)$$
For every $i\in I$, put $U_i=\langle X_i\rangle$. Note that $U_i\cong C_{2p}$ for every $i\in I$ and $U_i\cap U_j=\{1_G\}$ whenever $i\neq j$. So the group $U_i^{\psi}=\langle X_i^{\psi}\rangle$ is of order~$2p$ and hence it is isomorphic to one of the groups $C_{2p}$, $D_{2p}$. Also $U_i^{\psi}\cap U_j^{\psi}=\{1_H\}$ whenever $i\neq j$. If $U_i^{\psi}\cong U_j^{\psi}\cong C_{2p}$ for distinct $i,j\in I$ then 
$$k_2(H)=k_2(U_1^{\psi})+k_2(U_2^{\psi})+k_2(U_3^{\psi})\leq p+2,$$
a contradiction to Eq.~(16). Therefore there are $i,j\in I$ such that
$$i\neq j,~U_i^{\psi}\cong U_j^{\psi}\cong D_{2p},~\text{and}~U_i^{\psi}\cap U_j^{\psi}=\{1_H\}.~\eqno(17)$$

Clearly, $H_1\leq Z(H)$. So $H_1\cap U_i^{\psi}=H_1\cap U_j^{\psi}=\{1_H\}$ because the center of $D_{2p}$ is trivial. This implies that $|N_H(U_i^{\psi})|\geq |H_1U_i^{\psi}|=4p^2$ and $|N_H(U_j^{\psi})|\geq |H_1U_j^{\psi}|=4p^2$. Therefore $U_i^{\psi}$ and $U_j^{\psi}$ are normal in $H$. Together with Eq.~(17), this yields that $H=U_i^{\psi}\times U_j^{\psi}\cong D_{2p}\times D_{2p}$, which contradicts the assumption.
\end{proof}

\begin{lemm}\label{proof4}
In the above notations, $\psi$ is induced by a Cayley isomorphism.

\end{lemm}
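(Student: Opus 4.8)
The plan is to construct an explicit group isomorphism $f\colon G\to H$ and verify that it carries $\mathcal{S}$ onto $\mathcal{T}$, thereby witnessing that $\psi$ is a Cayley isomorphism. By Lemma~\ref{proof3} we know $H\cong C_{2p}\times C_{2p}\cong G$, so the abstract isomorphism exists; the real content is to choose one that is \emph{compatible with the basic sets}. First I would use the structure already extracted from $\psi$. The algebraic isomorphism preserves $\mathcal{A}$-subgroups and their orders, so $P^{\psi}$ is the Sylow $p$-subgroup of $H$ with $\mathcal{B}_{P^{\psi}}=\mathbb{Z}P^{\psi}$, and the three distinguished radicals $e_i$ of property $(A2)$ map to three distinct order-$p$ subgroups $\rad(X_i^{\psi})=P_i^{\psi}\leq P^{\psi}$ with pairwise trivial intersection. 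Likewise the three inverse-closed basic sets $X_1,X_2,X_3$ map to the three inverse-closed basic sets $X_i^{\psi}$, each of size $p$, each a coset of $P_i^{\psi}$ lying outside $P^{\psi}$.

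Next I would fix generators. Choose $x_i$ a generator of $P_i$ for $i\in I$; then $\{x_1,x_2\}$ generate $P$ and the three subgroups $P_1,P_2,P_3$ are determined. On the $H$ side, pick $y_i$ a generator of $P_i^{\psi}$; since $\psi$ sends $P_i$ to $P_i^{\psi}$ and preserves orders, $\{y_1,y_2\}$ generate $P^{\psi}$. I would define $f$ on $P$ by $x_1\mapsto y_1$, $x_2\mapsto y_2$, which forces the image of $P_3$ to be $P_3^{\psi}$ after possibly adjusting the chosen generators by the scalar relating $\psi$ on $\mathcal{A}_P$ (recall $\mathcal{A}_P=\mathbb{Z}P$ is the full group algebra, so $\psi$ restricted to $P$ is realized by a group automorphism of $P$ matching the radicals). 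To extend $f$ to all of $G=A\times P$, I would send the element $a_i$ into the coset $X_i^{\psi}(P_i^{\psi})^{-1}$: concretely, write $X_i=P_i a_i$ and $X_i^{\psi}=P_i^{\psi} b_i$ for some element $b_i\in H$ of order $2$ outside $P^{\psi}$, and set $a_i^f=b_i$. One must check that the assignment $a_i\mapsto b_i$ respects the relation $a_1a_2=a_3$ in $A$; this reduces to verifying $b_1b_2\in P_3^{\psi}b_3$, which follows from property $(B3)$ (equivalently $(A3)$) applied through $\psi$, since $X_1^{\psi}X_2^{\psi}$ meets exactly the cosets of the form $X_3^{\psi}g$.

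The key verification is then that the resulting $f$ is a homomorphism and that $\mathcal{S}^f=\mathcal{T}$. Homomorphism: with $f$ defined on the generators $x_1,x_2$ of $P$ and on $a_1,a_2$, and with $A\times P$ abelian of exponent dividing $2p$, I need only check that the images commute and satisfy the defining relations of $C_{2p}\times C_{2p}$; the commuting is automatic since $H$ is abelian by Lemma~\ref{proof3}, and the order conditions hold because $f$ maps generators to generators of the matching cyclic factors. The set equality $\mathcal{S}^f=\mathcal{T}$ amounts to checking it on each basic set: singletons $\{g\}$ with $g\in P$ map to singletons $\{g^f\}\in\mathcal{T}$ since $f$ restricts to the automorphism of $P$ realizing $\psi$ there, and a set $X_ig$ maps to $P_i^{\psi}b_i\,g^f=X_i^{\psi}g^f$, which is exactly the basic set $X^{\psi}$ for $X=X_ig$ by the definition $X^{\psi}=X(r^{\varphi}(X))$ together with the coset computation. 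The main obstacle I anticipate is the compatibility of the radical-matching on $P$ with the coset representatives $b_i$: one must ensure the single automorphism of $P$ chosen for the singletons simultaneously sends each $P_i$ to $P_i^{\psi}$ for all three $i$, and that the $b_i$ can be chosen so that $b_1b_2b_3^{-1}\in P_3^{\psi}$ rather than merely $b_1b_2\in P_3^{\psi}b_3'$ for some unrelated representative. Resolving this requires tracking how $\psi$ acts on the order-$2$ elements via inequality $(15)$-type counting and the structure $H\cong C_{2p}\times C_{2p}$, where every order-$2$ element lies in a unique product of the $A$-part, so the three $b_i$ automatically satisfy $b_1b_2=b_3$ up to the central square which is trivial. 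Once this coherence is secured, $f$ is the desired Cayley isomorphism inducing $\psi$.
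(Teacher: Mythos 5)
Your proposal is correct and follows essentially the same route as the paper: define $f$ on $P$ from the action of $\psi$ on singletons (the paper does this directly, proving multiplicativity via $c_{\{x\}\{y\}}^{\{xy\}}=1$, which makes your generator-choosing-and-adjusting detour unnecessary), send each $a_i$ to the unique involution $b_i\in X_i^{\psi}$, and verify $\mathcal{S}^f=\mathcal{T}$ on each kind of basic set, where for $X=X_ig$ the paper argues cleanly that $X$ is the unique basic set with $c_{X_i\{g\}}^{X}>0$, so $X^{\psi}=X_i^{\psi}\{g\}^{\psi}=X^f$. Your extra care about the relation $b_1b_2=b_3$ addresses a point the paper leaves implicit, and your resolution (in $H\cong C_{2p}\times C_{2p}$ the elements of order dividing $2$ form a Klein four-group, so the three distinct involutions $b_i$ must satisfy it) is correct.
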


\begin{proof}
Let $i\in I$. The set $X_i$ is a coset by an $\mathcal{A}$-subgroup $P_i$ of order~$p$. So $X_i^{\psi}$ is a coset by a $\mathcal{B}$-subgroup $P_i^{\psi}$ of order~$p$ by~\cite[Eq.~(3)]{Ry}. Since $U_i^{\psi}\cong C_{2p}$, the set $X_i^{\psi}$ contains a unique element of order~$2$. Denote this element by $b_i$. Clearly, $X_i^{\psi}=P_i^{\psi}b_i$ and $U_i^{\psi}=P_i^{\psi}\times \langle b_i^{\psi}\rangle$. 

Let $f_0$ be a bijection from $P$ to $P^{\psi}$ such that $\{g\}^{\psi}=\{g^{f_0}\}$ for every $g\in P$. Let us show that $f_0$ is an isomorphism from $P$ to $P^{\psi}$. Let $x,y\in P$. Clearly, $c_{\{x\}\{y\}}^{\{xy\}}=1$. Since $\psi$ is an algebraic isomorphism, we have $c_{\{x^{f_0}\}\{y^{f_0}\}}^{\{(xy)^{f_0}\}}=c_{\{x\}^{\psi}\{y\}^{\psi}}^{\{xy\}^{\psi}}=c_{\{x\}\{y\}}^{\{xy\}}=1$. This implies that $(xy)^{f_0}=x^{f_0}y^{f_0}$ and we are done.

Let $f$ be an isomorphism from $G$ to $H$ defined as follows:
$$f^P=f_0,~a_i^f=b_i~\text{for every}~i\in I.$$
Let us prove that $X^f=X^{\psi}$ for every $X\in \mathcal{S}(\mathcal{A})$. If $X\subseteq P$ then $X=\{g\}$ for some $g\in P$ and $X^f=X^{f_0}=\{g^{f_0}\}=\{g\}^{\psi}=X^{\psi}$. If $X=X_i$ for some $i\in I$ then
$$X^f=(P_ia_i)^f=P_i^{f_0}b_i=P_i^{\psi}b_i=X^{\psi}.$$
It remains to consider the case when $X=X_ig$ for some $i\in I$ and $g\in P$. In this case $X$ is a unique basic set of $\mathcal{A}$ such that $c_{X_i\{g\}}^X>0$. So $X^{\psi}$ is the unique basic set of $\mathcal{B}$ such that $c_{X_i^{\psi}\{g\}^{\psi}}^{X^{\psi}}>0$. This implies that $X^{\psi}=X_i^{\psi}\{g\}^{\psi}=X_i^fg^f=X^f$ as required. Thus, $f$ induces~$\psi$.
\end{proof}

We have proved that $\psi$ is induced by a Cayley isomorphism. So $\varphi$ is induced by a combinatorial isomorphism. Thus, every algebraic isomorphism from $\mathcal{X}$ is induced by a combinatorial isomorphism, i.e. $\mathcal{X}$ is separable.

\end{document}